\documentclass[11pt]{article}
\usepackage[utf8]{inputenc}
\usepackage[T1]{fontenc}
\usepackage{lmodern}
\usepackage{microtype}
\usepackage{amsmath,amssymb,amsthm,mathtools}
\usepackage{graphicx}
\usepackage[a4paper,margin=1in]{geometry}
\usepackage{enumitem}
\usepackage{hyperref}
\hypersetup{colorlinks=true,linkcolor=blue,citecolor=blue,urlcolor=blue}

\newtheorem{theorem}{Theorem}[section]
\newtheorem{lemma}[theorem]{Lemma}
\newtheorem{proposition}[theorem]{Proposition}

\newtheorem{corollary}[theorem]{Corollary}
\theoremstyle{definition}
\newtheorem{definition}[theorem]{Definition}
\newtheorem{example}[theorem]{Example}
\theoremstyle{remark}
\newtheorem{remark}[theorem]{Remark}

\title{Stochastic realisers of non-degenerate full-degree Type II reduced Ito polynomials}
\author{
Brecht Verbeken$^{1,2}$ and Vincent Ginis$^{1,2,3}$\\[0.5em]
$^1$Department of Business Technology and Operations, Data Analytics Laboratory,\\
Vrije Universiteit Brussel (VUB), Pleinlaan 2, 1050 Brussels, Belgium\\
$^2$imec-SMIT, Vrije Universiteit Brussel, Pleinlaan 9, 1050 Brussels, Belgium\\
$^3$School of Engineering and Applied Sciences, Harvard University,\\
Cambridge, Massachusetts 02138, USA\\[0.5em]
\texttt{brecht.verbeken@vub.be}\quad \texttt{vincent.ginis@vub.be}
}
\date{}

\begin{document}
\maketitle

\begin{abstract}
Let
\[
        f_\alpha(x)=\bigl(x^q-(1-\alpha)\bigr)^d-\alpha^d x^z,
        \qquad 0<\alpha<1,
\]
where $q\ge2$, $d\ge2$, $1\le z\le q-1$, and $\gcd(q,z)=1$. These are the non-degenerate full-degree Type~II reduced Ito polynomials of order $n=qd$. We give a complete parametrisation, up to permutation similarity, of the stochastic matrices whose characteristic polynomial is $f_\alpha$.

The support data are a composition $a_0+\cdots+a_{d-1}=z$ and a cyclic origin $\rho\in\mathbb Z_q$. The transfer support in block $0$ is an arbitrary nonempty subset of $[\rho,\rho+a_0]_q$, while the support in block $t\ge1$ is an arbitrary nonempty subset of
\[
\left[
 \rho-\sum_{j=1}^t(a_j+1),\,
 \rho-\sum_{j=1}^t(a_j+1)+a_t
\right]_q .
\]
The horizontal weights at the selected positions are arbitrary elements of $(0,1)$ whose product in every block is $1-\alpha$. The proof first shows that the order-$n$ Type~II arc lies strictly outside $\Theta_{n-1}$, which permits the Dmitriev--Dynkin/Kirkland--\v Smigoc two-shift reduction. A circular separation theorem then classifies the possible supports, and Coates' formula supplies the characteristic polynomial. This resolves the non-sparse full-degree Type~II characteristic-polynomial realiser problem.
\end{abstract}

\noindent\textbf{Keywords:} stochastic matrix, Karpelevich region, reduced Ito polynomial, nonnegative inverse eigenvalue problem, Coates formula, directed graph

\noindent\textbf{2020 MSC:} 15A18, 15B51, 05C20, 60J10

\section{Introduction}

The eigenvalues of finite stochastic matrices are constrained in a way that is both spectral and combinatorial.  If $\Theta_n$ denotes the set of all complex numbers that occur as eigenvalues of $n\times n$ stochastic matrices, then Karpelevich's theorem describes $\Theta_n$ exactly \cite{Karpelevich1951}; see also the modern account in \cite{KirklandLaffeySmigoc2020}.  Ito's formulation expresses the boundary arcs of $\Theta_n$ through one-parameter polynomial equations attached to Farey-neighbour data \cite{Ito1997}.  After removing extraneous zero factors, these equations give the reduced Ito polynomials of Types 0, I, II, and III.

A distinct problem, and the one considered here, is not merely to know where the boundary eigenvalues lie, but to understand the stochastic matrices that realise the corresponding boundary polynomials as characteristic polynomials.  Johnson and Paparella gave a parametric realising matrix for each Karpelevich arc \cite{JohnsonPaparella2017}.  Kirkland and \v Smigoc then initiated a structural realisation programme \cite{KirklandSmigoc2022}.  They proved complete classifications for the Type 0 and Type I reduced Ito polynomials, and for Types II and III they proved the corresponding classifications under a sparsity hypothesis.  Their Type II section makes the remaining obstruction explicit: after the forced $q$-cycles are present, there may be more than one edge between consecutive $q$-cycle blocks, and therefore there may be many $(n-z)$-cycles rather than a single one.  They give a recursive edge-addition discussion and examples, but no closed form for all non-sparse Type II supports.

Joshi, Kirkland and \v Smigoc later studied powers of Karpelevich arcs and powers of their sparsest realising matrices \cite{JoshiKirklandSmigoc2023}.  Their matrix results are formulated for the sparsest class $M_n^0(q,s)$, so they do not subsume the non-sparse Type II support problem solved here.  The companion Type III manuscript of the present authors proves the Kirkland--\v Smigoc Type III realisation conjecture for genuine Type III reduced Ito polynomials in the range $0<\alpha\le 1$ \cite{VerbekenGinisTypeIII}.  That result is also separate: Type III has a global $n$-cycle together with backward $q$-edges, whereas Type II has $d$ horizontal $q$-cycles and transfer edges between the blocks.  The two finite support geometries are different.

This article supplies the missing closed form for Type II.  In the full-degree Type II case one has
\[
        n=qd,\qquad d\ge 2,
\]
and the reduced Ito polynomial is
\begin{equation}
        f_\alpha(x)=\bigl(x^q-\beta\bigr)^d-\alpha^d x^z,
        \qquad \beta=1-\alpha,
        \qquad 1\le z\le q-1 .
        \label{eq:typeIIpoly}
\end{equation}
Put $s=qd-z$.  The determinant-one Farey condition for the endpoints neighbouring $q$ is exactly
\[
        \gcd(q,z)=\gcd(q,qd-z)=1 .
\]
This coprimality is not cosmetic: it is the congruence that prevents a simple transfer cycle from winding more than once through each $q$-cycle block.  Lemma \ref{lem:typeII-exact-order} below proves that every such bare arithmetic datum gives an exact-order full-degree Type II arc: its non-endpoint points lie in $\Theta_{qd}\setminus\Theta_{qd-1}$.

The result is easiest to state in a normal form.  The vertices are
\[
        \mathbb Z_d\times\mathbb Z_q .
\]
The horizontal edges are
\[
        (t,i)\longrightarrow (t,i+1),
\]
and the possible transfer edges are
\[
        (t,i)\longrightarrow (t+1,i),\qquad t=0,\ldots,d-2,
\]
while the last block has possible transfer edges
\[
        (d-1,i)\longrightarrow (0,i+d+z).
\]
All second coordinates are read modulo $q$.  Let $S_t\subseteq\mathbb Z_q$ be the set of transfer positions in block $t$.  The main support theorem says that the admissible families $S_0,\ldots,S_{d-1}$ are exactly the following.  Choose
\[
        a_0,\ldots,a_{d-1}\ge 0,
        \qquad a_0+\cdots+a_{d-1}=z,
\]
and choose $\rho\in\mathbb Z_q$.  Then
\[
        \varnothing\ne S_0\subseteq [\rho,\rho+a_0]_q,
\]
and, for $t=1,\ldots,d-1$,
\[
        \varnothing\ne S_t\subseteq
        \left[
        \rho-\sum_{j=1}^t(a_j+1),\,
        \rho-\sum_{j=1}^t(a_j+1)+a_t
        \right]_q .
\]
There is no hidden extra inequality in this formulation.  An older carry notation uses $m_t=a_t+1$, so that $\sum_t m_t=d+z$; the local separation inequalities $m_{t-1}+m_t\le q+1$ then follow automatically from $\sum_t a_t=z\le q-1$.

Once the support is known, the weights have no further combinatorial restrictions.  Write $b_{t,i}$ for the horizontal weight at $(t,i)$.  If $i\notin S_t$, then $b_{t,i}=1$ and no transfer edge is present.  If $i\in S_t$, then $0<b_{t,i}<1$ and the transfer edge has weight $1-b_{t,i}$.  The condition on each horizontal $q$-cycle is simply
\[
        \prod_{i\in S_t} b_{t,i}=\beta=1-\alpha,
        \qquad t=0,
        \ldots,d-1 .
\]
Every such choice gives a stochastic matrix with characteristic polynomial \eqref{eq:typeIIpoly}, and every stochastic matrix with that characteristic polynomial is permutation-similar to one obtained in this way.

The proof is deliberately elementary after the standard Kirkland--\v Smigoc reduction.  We first translate their two-shift normal form $D C_n^{pd}+(I-D)C_n^{pd+1}$ into the block model above.  The key observation is that a transfer cycle with $k$ transfer edges in each block has length congruence
\[
        q\mid (k-1)z .
\]
Since $\gcd(q,z)=1$ and a simple cycle cannot use more than $q$ transfer edges inside one block, every admissible transfer cycle must use exactly one transfer edge from each block. The length condition therefore becomes a constant-carry identity. A circular separation lemma converts that identity into the interval parametrisation above. Coates' formula then gives
\[
        \chi_A(x)=\bigl(x^q-\beta\bigr)^d-Wx^z,
\]
and stochasticity gives $W=\alpha^d$ by evaluating at $x=1$.

The result is a characteristic-polynomial parametrisation for full-degree Type~II realisers. It does not address Type~III realisers or the separate root-continuation and extremality questions for Karpelevich arcs; these boundaries of the result are collected in Section~\ref{sec:consequences}.

The paper is organised as follows. Section \ref{sec:background} recalls Coates' formula and derives the Type II two-choice normal form used below. Section \ref{sec:support} proves the cyclic support theorem. Section \ref{sec:weighted} adds the weights and proves the full Type II parametrisation. Section \ref{sec:consequences} records the sparsest subcase, the dimension of the weight parameter space, and the precise boundary of the result.

\section{Background and Type II normal form}
\label{sec:background}

Throughout, a stochastic matrix is row-stochastic: its entries are nonnegative and each row sum is one.  A weighted directed graph has positive weights on its directed edges.  The weighted digraph of a nonnegative matrix has an edge $i\to j$ precisely when the $(i,j)$ entry is positive, with that entry as its edge weight.

For a positive integer $q$, write $\mathbb Z_q=\mathbb Z/q\mathbb Z$.  If $u\in\mathbb Z$, let
\[
        \langle u\rangle_q\in\{0,1,\ldots,q-1\}
\]
be its least nonnegative residue modulo $q$.  For $a\in\mathbb Z_q$ and an integer $\ell\in\{1,\ldots,q\}$, define the cyclic interval
\[
        [a,a+\ell-1]_q:=\{a,a+1,\ldots,a+\ell-1\}\subseteq\mathbb Z_q .
\]
When integer lifts are being used, the same notation without the subscript $q$ denotes the ordinary integer interval.

\subsection{Coates' formula}

Let $\Gamma$ be a weighted digraph on $n$ vertices with adjacency matrix $A$. A linear subdigraph is a vertex-disjoint union of directed cycles. If $L$ is a linear subdigraph, let $|V(L)|$ be its number of vertices, $\nu(L)$ its number of cycles, and $w(L)$ the product of its edge weights. Coates' formula gives the coefficients of
\[
        \chi_A(x)=\det(xI_n-A)=x^n+k_1x^{n-1}+\cdots+k_n
\]
by
\begin{equation}
        k_j=\sum_{L\in\mathcal L_j}(-1)^{\nu(L)}w(L),
        \qquad j=1,\ldots,n,
        \label{eq:coates}
\end{equation}
where $\mathcal L_j$ is the set of all linear subdigraphs using exactly $j$ vertices \cite{Coates1959}.

\subsection{External inputs, Farey data, and the Type II block form}

Fix
\[
        q\ge 2,
        \qquad d\ge 2,
        \qquad 1\le z\le q-1,
        \qquad \gcd(q,z)=1,
\]
and put
\[
        n=qd,
        \qquad s=qd-z,
        \qquad c=d+z .
\]
Let
\[
        f_\alpha(x)=\bigl(x^q-\beta\bigr)^d-\alpha^d x^z,
        \qquad \beta=1-\alpha,
        \qquad 0<\alpha<1 .
\]
We first spell out the elementary Farey arithmetic behind these parameters.

\begin{lemma}[Type II Farey data from $q,d,z$]
\label{lem:typeII-farey}
Let $q\ge2$, $d\ge2$, $1\le z\le q-1$, and $\gcd(q,z)=1$.  Put $n=qd$ and $s=qd-z$.  Let $p\in\{1,\ldots,q-1\}$ be the unique residue satisfying
\begin{equation}
        pz\equiv1\pmod q,
        \label{eq:pz}
\end{equation}
and define
\[
        r=\frac{ps+1}{q}.
\]
Then $r$ is an integer, $0<p<q$, $0<r<s$, and
\begin{equation}
        rq-ps=1 .
        \label{eq:farey-determinant}
\end{equation}
Consequently $p/q$ and $r/s$ are Farey neighbours of both orders $n-1$ and $n$.  The corresponding order-$n$ reduced Ito polynomial is the full-degree Type II polynomial
\[
        \bigl(x^q-\beta\bigr)^d-\alpha^d x^z,
        \qquad n=qd,
        \qquad s=qd-z .
\]
In particular, the congruence \eqref{eq:pz} is not an extra convention: it is exactly the determinant-one Farey condition written in Type II variables.
\end{lemma}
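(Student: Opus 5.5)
The plan is to verify the arithmetic claims directly from \eqref{eq:pz}, and then match the data against Ito's Type~II template. Since $\gcd(q,z)=1$, $z$ is invertible modulo $q$, so a unique $p\in\{1,\ldots,q-1\}$ with $pz\equiv1\pmod q$ exists. Note $p\neq0$ because $q\ge2$ forces $0\not\equiv1$. Then
\[
ps+1=p(qd-z)+1=pqd-(pz-1)\equiv0\pmod q,
\]
so $r=(ps+1)/q$ is an integer, and \eqref{eq:farey-determinant} is just the definition rearranged. For the bounds, $r>0$ is clear. Also $r<s$ is equivalent to $ps+1<qs$, i.e.\ $(q-p)s\ge1$, which holds because $p\le q-1$ and $s\ge qd-(q-1)\ge1$.

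For the Farey claims, I would use the standard characterisation. Two reduced fractions $p/q<r/s$ with $rq-ps=1$ are neighbours in the Farey sequence $F_N$ exactly when $\max(q,s)\le N<q+s$. Determinant one already gives $\gcd(p,q)=\gcd(r,s)=1$ and $p/q<r/s$. Here $\max(q,s)=s=qd-z\le n-1$, since $z\ge1$, and $q\le s$ because $d\ge2$ and $z<q$. Moreover $q+s=q(d+1)-z>qd=n$, since $z<q$. Hence both $N=n-1$ and $N=n$ lie in $[\max(q,s),q+s-1]$, and the fractions are neighbours of both orders. For the converse remark, any neighbouring pair with denominators $q$ and $s=qd-z$ satisfying $rq-ps=1$ forces $ps\equiv-1$, i.e.\ $-pz\equiv-1\pmod q$. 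This recovers \eqref{eq:pz}, so the congruence is exactly the determinant condition.

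The remaining step, and the main obstacle, is identifying the Ito polynomial. I would quote Ito's formula, in the reduced form used by Kirkland--\v Smigoc, for a Farey pair $p/q<r/s$ of order $n$ with $\lfloor n/q\rfloor=d$ and $s=qd-z$. This is the Type~II case: $q\mid n$, and the larger denominator $s$ is not $n$. In that case the order-$n$ Ito equation reads
\[
(x^q-\beta t)^{\lfloor n/q\rfloor}=\alpha^{\lfloor n/q\rfloor}x^{q\lfloor n/q\rfloor-s}
\]
up to the extraneous factor $x^{\,\cdot}$ that gets removed. Substituting $\lfloor n/q\rfloor=d$ and $q\lfloor n/q\rfloor-s=z$ gives $(x^q-\beta)^d-\alpha^dx^z$.

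The care needed is bookkeeping. I must confirm which case of Ito's classification applies, namely that $\lfloor n/q\rfloor\cdot q=n$ (full degree) and that $s<n$ excludes the Type~0/I forms. I must also check that the exponent after reduction is $z$ and not $s$. I would verify this by checking the degree $qd=n$, and by checking that $f_\alpha(1)=\alpha^d-\alpha^d=0$, consistent with $1$ being the Perron root at the arc's parametrisation.
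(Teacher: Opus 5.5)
Your proposal is correct and follows the paper's proof essentially step for step: $p$ as the inverse of $z$ modulo $q$, divisibility of $ps+1=pqd-(pz-1)$ by $q$, the neighbour criterion $\max(q,s)\le N<q+s$ applied with $s\le n-1$ and $q+s=n+(q-z)>n$, and Ito's equation with $\lfloor n/q\rfloor=d$ and $qd-s=z$. Two small slips to fix. First, $r<s$ is equivalent to $(q-p)s>1$, not $(q-p)s\ge1$; it still holds because $s\ge q+1\ge3$, which is what the paper uses via $s>1$. Second, here it is $\lfloor n/q\rfloor=d\ge2$, not $s<n$, that excludes Type~I (and the $t$ in $x^q-\beta t$ is a stray typo).
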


\begin{proof}
Since $\gcd(q,z)=1$, the inverse $p$ exists and is unique in $\{1,\ldots,q-1\}$.  Because $s=qd-z$, we have
\[
        ps+1=p(qd-z)+1=pqd-(pz-1),
\]
which is divisible by $q$ by \eqref{eq:pz}.  Thus $r$ is an integer and \eqref{eq:farey-determinant} holds.  The positivity of $p$ gives $r>0$, and
\[
        qs-(ps+1)=(q-p)s-1\ge s-1>0,
\]
because $p\le q-1$ and $s>1$.  Hence $0<r<s$.  Equation \eqref{eq:farey-determinant} implies both $p/q<r/s$ and $\gcd(r,s)=1$.  Finally,
\[
        q<s\le n-1,
        \qquad q+s=q+qd-z=n+(q-z)>n,
\]
so $p/q$ and $r/s$ are Farey neighbours in both $F_{n-1}$ and $F_n$.  Ito's order-$n$ reduction for this full-degree Type II case gives $z=qd-s$ and hence the displayed polynomial.
\end{proof}

For the rest of the paper, $p,r,s$ are the Farey data supplied by Lemma \ref{lem:typeII-farey}.  The same endpoints occur already in order $n-1$, but the boundary arcs for orders $n-1$ and $n$ are not the same.  The next lemma proves the exact-order statement needed for the Dmitriev--Dynkin/Kirkland--\v Smigoc reduction.

\begin{lemma}[Exact order of full-degree Type II arcs]
\label{lem:typeII-exact-order}
Let $q\ge2$, $d\ge2$, $1\le z\le q-1$, and $\gcd(q,z)=1$.  Put
\[
        n=qd,
        \qquad s=qd-z,
\]
and let $p,r$ be the integers from Lemma \ref{lem:typeII-farey}.  Let $K_N(\{q,s\})$ denote the open Karpelevich boundary arc of $\Theta_N$ attached to the Farey neighbours $p/q<r/s$, whenever this pair is a Farey pair of order $N$.  Then every non-endpoint point of the order-$n$ arc is outside $\Theta_{n-1}$:
\[
        K_n(\{q,s\})\subseteq \mathbb C\setminus\Theta_{n-1} .
\]
Consequently, for every $0<\alpha<1$, the Karpelevich--Ito boundary root $\lambda_\alpha$ of
\[
        (x^q-(1-\alpha))^d-\alpha^d x^z
\]
on this open arc satisfies
\[
        \lambda_\alpha\in K_n(\{q,s\})\setminus\Theta_{n-1}.
\]
\end{lemma}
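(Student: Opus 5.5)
The plan is to reduce the statement to a comparison of two boundary arcs with the same Farey endpoints, and then to separate those arcs analytically.

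\emph{Reduction.} Direct sums with the $1\times1$ matrix $(1)$ give $\Theta_{n-1}\subseteq\Theta_n$. Suppose some non-endpoint $\lambda\in K_n(\{q,s\})$ also lies in $\Theta_{n-1}$. Since $\lambda\in\partial\Theta_n$ and $\Theta_{n-1}\subseteq\Theta_n$, we get $\lambda\in\partial\Theta_{n-1}$. By Lemma~\ref{lem:typeII-farey}, $p/q$ and $r/s$ are Farey neighbours of order $n-1$. So Karpelevich's theorem, applied to the closed sector $\{2\pi p/q\le\arg x\le 2\pi r/s\}$ (where only this one arc of $\partial\Theta_{n-1}$ lives apart from its endpoints), forces $\lambda\in K_{n-1}(\{q,s\})$. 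The statement therefore reduces to
\[
K_n(\{q,s\})\cap K_{n-1}(\{q,s\})=\varnothing .
\]
Both open arcs join $e^{2\pi i p/q}$ to $e^{2\pi i r/s}$, so this is a genuine separation statement, not a trivial one.

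\emph{Writing down the two Ito equations.} Since $\lfloor n/q\rfloor=d$ and $\lfloor (n-1)/q\rfloor=d-1$, Ito's order-$(n-1)$ polynomial for the pair $(q,s)$ has the Type~III shape with $q(d-1)<s<qd$. I expect it to be essentially
\[
g_{\gamma}(x)=x^{s}-\gamma\, x^{s-q(d-1)}\cdots,
\]
more precisely an equation of the form $(x^q-(1-\gamma))^{d-1}x^{q-z}=\gamma^{d-1}\cdot(\text{monomial})$ with parameter $\gamma\in(0,1)$. I will read the exact form off Ito's theorem as recalled in \cite{Ito1997,KirklandSmigoc2022}. A point of $K_n$ satisfies $(\lambda^q-\beta)^d=\alpha^d\lambda^z$. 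A common point would satisfy both equations for some $\alpha,\gamma\in(0,1)$.

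\emph{Separation.} I would not solve for $\lambda$ directly. Instead I would parametrise both arcs by the argument $\theta$ in the open sector and show that, for every such $\theta$, the order-$n$ arc has strictly larger modulus. Equivalently, $K_n$ lies in the unbounded component of $\mathbb C\setminus\bigl(K_{n-1}\cup\text{radii}\bigr)$.

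\begin{itemize}[leftmargin=1.5em]
\item One route is to use that $\Theta_{n-1}\cap\{\text{sector}\}$ is star-shaped from $0$ (it is closed under $x\mapsto tx+(1-t)$-type convex operations and contains $[0,1]$). It would then suffice to exhibit, for each $\alpha$, a single explicit point strictly beyond $K_{n-1}$ on the ray through $\lambda_\alpha$.
\item The alternative, which I would try first, is algebraic. Eliminating the parameters gives closed-form curves: on $K_n$, taking $d$-th roots along the correct branch,
\[
\lambda^q=\beta+\alpha\lambda^{z/d}.
\]
On $K_{n-1}$ there is an analogous relation with exponent $d-1$. Then compare $|\lambda|$ at fixed argument using monotonicity in the exponent.
\end{itemize}

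\emph{Main obstacle.} The hardest step is this last comparison, because the branch choices for the $d$-th and $(d-1)$-th roots must be tracked consistently along the sector. As a fallback I would use the realiser description: the sparsest Kirkland--\v Smigoc realiser of $K_{n-1}$ needs only $n-1$ vertices. If $\lambda\in K_{n-1}$ were also on $K_n$, a realiser of it in dimension $n-1$, padded by the vertex $(1)$, would be a non-irreducible $n\times n$ boundary realiser. That contradicts the fact that order-$n$ boundary points off $\Theta_{n-1}$ require the irreducible two-shift form $DC_n^{pd}+(I-D)C_n^{pd+1}$. This fallback is circular unless done carefully, so the analytic separation remains the primary route.

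The final clause of the lemma is immediate once the separation holds, since $\lambda_\alpha\in K_n(\{q,s\})$ by Ito's theorem and Lemma~\ref{lem:typeII-farey}.
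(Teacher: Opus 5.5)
\textbf{Verdict: there is a genuine gap.} The decisive strict comparison of the two boundary radii is never proved.

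Your reduction is fine. Because $\Theta_{n-1}\subseteq\Theta_n$ and both boundaries are radial on the common sector, a point of $K_n(\{q,s\})\cap\Theta_{n-1}$ would have to lie on $K_{n-1}(\{q,s\})$. So everything rests on showing that, at every interior argument $\theta$, the order-$n$ boundary radius is \emph{strictly} larger than the order-$(n-1)$ one. But the weak inequality $\rho_{d-1}(\theta)\le\rho_d(\theta)$ already follows from $\Theta_{n-1}\subseteq\Theta_n$. The whole content of the lemma is strictness, i.e.\ that the two analytic arcs never touch, and that is exactly the step you leave open.

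Your three routes do not close it:
\begin{itemize}
\item ``Monotonicity in the exponent'' does not supply it. In the uniform form $\lambda^q=\beta+\alpha\omega\lambda^{q-s/m}$ the exponent $q-s/m$ does increase from $m=d-1$ to $m=d$. But at a fixed argument the two arcs carry unrelated parameters, so nothing can be compared until the parameter has been eliminated.
\item The star-shaped route is vacuous: the point ``beyond $K_{n-1}$'' you would need to exhibit is $\lambda_\alpha$ itself.
\item The fallback is circular, as you note.
\end{itemize}
A minor point: for $d=2$ the order-$(n-1)$ polynomial is of Type~I, not Type~III.

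The missing idea is to eliminate $\alpha$ at fixed $\theta$. Write $\lambda=\rho e^{i\theta}$, $\phi=q\theta-2\pi p$, $\eta=2\pi r-s\theta$, and take the branch $\lambda^{s/m}=\rho^{s/m}e^{-i\eta/m}$; this also settles your branch-tracking worry. Solve the $m$-th root of Ito's equation $\lambda^s(\lambda^q-\beta)^m=\alpha^m\lambda^{qm}$ for the real parameter $\beta$ and take imaginary parts. This forces
\[
G_m(\rho)=\rho^q\sin\tfrac{\eta}{m}+\rho^{s/m}\sin\phi-\sin\bigl(\phi+\tfrac{\eta}{m}\bigr)=0 .
\]
The Kirkland--Laffey--\v Smigoc radial theorem then identifies the boundary radius at floor parameter $m$ as the unique zero $\rho_m(\theta)\in(0,1)$ of this increasing function.

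The paper evaluates $G_d$ at $\rho=\rho_{d-1}(\theta)$. Put $\delta=\eta/(d-1)$, $\tau=(d-1)/d$ and $Y=\rho^{s/(d-1)}$, and eliminate $\rho^q$ using $G_{d-1}(\rho)=0$. This gives
\[
-G_d(\rho)=\sin\phi\Bigl[\frac{\sin((1-\tau)\delta)}{\sin\delta}+Y\frac{\sin(\tau\delta)}{\sin\delta}-Y^{\tau}\Bigr]>0 .
\]
Positivity holds because $\sin u/u$ is strictly decreasing on $(0,\pi)$ and $Y^\tau<(1-\tau)+\tau Y$ by weighted AM--GM. Together with $G_d(1)>0$, this places $\rho_d(\theta)$ in $(\rho_{d-1}(\theta),1)$. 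Some strict inequality of this kind is indispensable; without it your argument only recovers the weak inclusion.
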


\begin{proof}
Lemma \ref{lem:typeII-farey} gives $q,s\le n-1$ and $q+s>n$, so the same determinant-one pair $p/q<r/s$ is a Farey-neighbour pair for both $F_{n-1}$ and $F_n$.  The distinction is the Ito floor parameter:
\[
        \left\lfloor\frac{n-1}{q}\right\rfloor=d-1,
        \qquad
        \left\lfloor\frac n q\right\rfloor=d .
\]
We compare the two radial boundary radii in the common sector
\[
        \frac{2\pi p}{q}<\theta<\frac{2\pi r}{s} .
\]
Set
\[
        \phi=q\theta-2\pi p,
        \qquad
        \eta=2\pi r-s\theta .
\]
Then
\[
        0<\phi<\frac{2\pi}{s},
        \qquad
        0<\eta<\frac{2\pi}{q},
        \qquad
        s\phi+q\eta=2\pi .
\]
By the radial boundary theorem of Kirkland--Laffey--\v Smigoc \cite[Theorem 1.2]{KirklandLaffeySmigoc2020}, applied to the Farey-neighbour pair $p/q<r/s$ at the relevant order, the boundary on this determinant-one sector is radial. If the Ito floor parameter at that order is $m$, its radius is the unique boundary zero $\rho_m=\rho_m(\theta)$ in $(0,1)$ of
\begin{equation}
        G_m(\rho)
        =
        \rho^q\sin\frac{\eta}{m}
        +
        \rho^{s/m}\sin\phi
        -
        \sin\left(\phi+\frac{\eta}{m}\right).
        \label{eq:radial-Gm}
\end{equation}
The strict upper bound follows also from the Farey property: an interior point of the sector on the unit circle would be a root of unity of order at most the matrix order, contrary to $p/q$ and $r/s$ being consecutive in the corresponding Farey sequence. Thus $m=d-1$ gives the boundary radius of $\Theta_{n-1}$ on this ray, even though that lower-order boundary equation is not full-degree Type II, while $m=d$ gives the boundary radius of $\Theta_n$ on the order-$n$ full-degree Type II arc. We prove
\[
        \rho_d(\theta)>\rho_{d-1}(\theta)
\]
for every interior $\theta$.

Put $m=d-1$, let $\rho=\rho_m(\theta)$, and define
\[
        \delta=\frac{\eta}{m},
        \qquad
        \tau=\frac{m}{m+1} .
\]
Then $0<\delta<\pi$, $0<\tau<1$, and $\eta/(m+1)=\tau\delta$.  Also put
\[
        X=\rho^q,
        \qquad
        Y=\rho^{s/m} .
\]
Since $0<\rho<1$, we have $0<Y<1$ and
\[
        \rho^{s/(m+1)}=Y^\tau .
\]
The equation $G_m(\rho)=0$ is
\begin{equation}
        X\sin\delta+Y\sin\phi=\sin(\phi+\delta).
        \label{eq:old-radius-equation}
\end{equation}
We evaluate the next boundary function at the old radius.  Showing $G_{m+1}(\rho)<0$ is equivalent to showing
\[
        \sin(\phi+\tau\delta)
        -X\sin(\tau\delta)
        -Y^\tau\sin\phi
        >0 .
\]
Using \eqref{eq:old-radius-equation} to eliminate $X$, the left-hand side becomes
\[
        \sin\phi
        \left[
        \frac{\sin((1-\tau)\delta)}{\sin\delta}
        +
        Y\frac{\sin(\tau\delta)}{\sin\delta}
        -
        Y^\tau
        \right].
\]
The function $u\mapsto \sin u/u$ is strictly decreasing on $(0,\pi)$, so
\[
        \frac{\sin((1-\tau)\delta)}{\sin\delta}>1-\tau,
        \qquad
        \frac{\sin(\tau\delta)}{\sin\delta}>\tau .
\]
Weighted AM--GM gives
\[
        Y^\tau<(1-\tau)+\tau Y,
\]
because $0<Y<1$.  Hence the bracket is strictly positive, and therefore
\[
        G_{m+1}(\rho)<0 .
\]
Moreover
\[
        G'_{m+1}(\rho)
        =
        q\rho^{q-1}\sin\frac{\eta}{m+1}
        +
        \frac{s}{m+1}\rho^{s/(m+1)-1}\sin\phi
        >0
\]
for $\rho>0$. At the other end of the radial interval,
\[
\begin{aligned}
G_{m+1}(1)
&=\sin\frac{\eta}{m+1}+\sin\phi
  -\sin\left(\phi+\frac{\eta}{m+1}\right)\\
&=4\sin\frac{\phi}{2}
    \sin\frac{\eta}{2(m+1)}
    \sin\left(\frac{\phi}{2}+\frac{\eta}{2(m+1)}\right)>0.
\end{aligned}
\]
All three sine factors are positive because
$0<\phi<2\pi/s$, $0<\eta/(m+1)<2\pi/(q(m+1))$, and their sum is less than $2\pi$. Since $G_{m+1}(\rho)<0$, continuity and strict monotonicity place its unique zero in $(\rho,1)$. Hence
\[
        \rho_{m+1}(\theta)>\rho_m(\theta).
\]
With $m=d-1$, this is exactly
\[
        \rho_d(\theta)>\rho_{d-1}(\theta).
\]

The radial boundary theorem identifies $\rho_{d-1}(\theta)e^{i\theta}$ as the outer boundary point of $\Theta_{n-1}$ on this ray.  Therefore the order-$n$ point $\rho_d(\theta)e^{i\theta}$ is not in $\Theta_{n-1}$.  Since this holds for every interior argument of the arc, the open order-$n$ Type II arc lies in $\Theta_n\setminus\Theta_{n-1}$.  Ito's parametrisation then gives, for every $0<\alpha<1$, a nonzero boundary root $\lambda_\alpha$ of the displayed Type II polynomial on this open exact-order arc.
\end{proof}

\begin{figure}[t]
\centering
\includegraphics[width=0.74\textwidth]{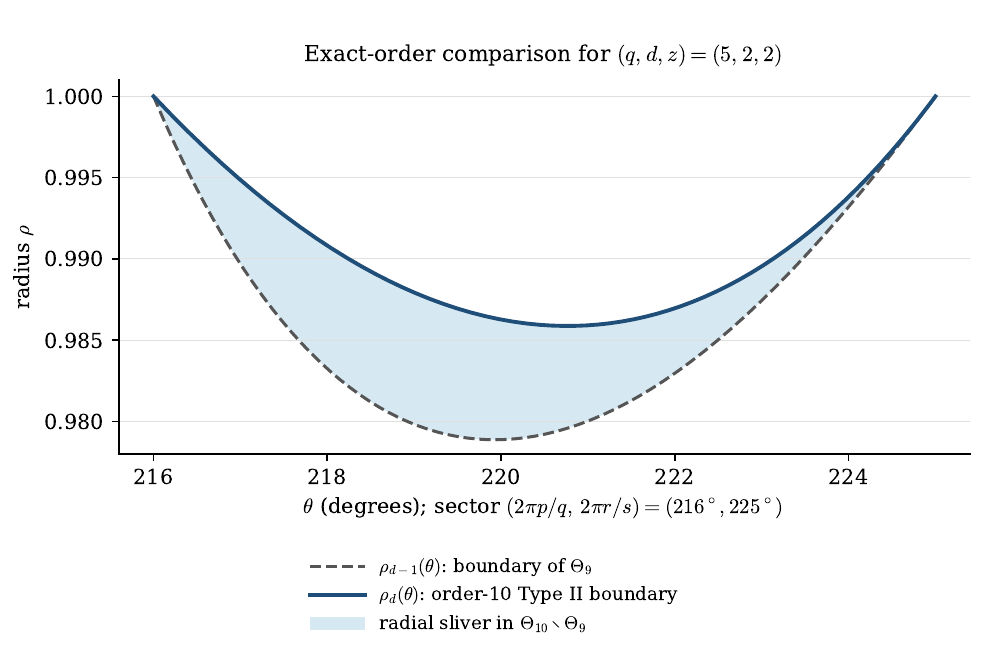}
\caption{The radial comparison in Lemma~\ref{lem:typeII-exact-order} for
$(q,d,z)=(5,2,2)$, so $n=10$, $s=8$, $p/q=3/5$, and $r/s=5/8$.
In the common sector, the order-$10$ full-degree Type~II boundary
(solid) lies strictly outside the boundary of $\Theta_9$ (dashed).
The shaded sliver illustrates the strict inclusion proved in the lemma.}
\label{fig:arcs}
\end{figure}

Ito's parametrisation gives the order-$n$ full-degree Type II arc by the nonzero solutions of
\begin{equation}
        t^s(t^q-\beta)^d=\alpha^d t^{qd},
        \qquad 0<\alpha<1,
        \label{eq:ito-typeII-arc}
\end{equation}
where $d=n/q$.  Since $s=qd-z$, division by the nonzero factor $t^s=t^{qd-z}$ gives
\[
        (t^q-\beta)^d-\alpha^d t^z=0 .
\]
By Lemma \ref{lem:typeII-exact-order}, the Karpelevich--Ito boundary root $\lambda_\alpha$ of $f_\alpha$ satisfies
\[
        \lambda_\alpha\in K_n(\{q,s\})\setminus\Theta_{n-1}.
\]
Moreover, the Farey determinant gives
\[
        \frac rs-\frac pq=\frac1{qs}.
\]
Since $s=qd-z>d$, we have $1/(qs)<1/(qd)=1/n$. As
$p/q=pd/n$, every interior arc argument therefore satisfies
\[
        \frac{2\pi pd}{n}
        <\arg\lambda_\alpha
        <\frac{2\pi(pd+1)}{n},
\]
so it lies strictly between two consecutive $n$th-root arguments.
Since any stochastic matrix with characteristic polynomial $f_\alpha$ has $\lambda_\alpha$ as an eigenvalue, the hypotheses of the Dmitriev--Dynkin/Kirkland--\v Smigoc two-shift reduction apply.

The actual structural input for the next proposition is Kirkland--\v Smigoc, not Karpelevich--Ito.  We import the following results from their paper, and no non-sparse Type II support classification.
\begin{enumerate}[label=\textup{(I\arabic*)},leftmargin=*]
\item Kirkland--\v Smigoc's Theorem 3.1, following Dmitriev--Dynkin
\cite{DmitrievDynkin1946}, says that an $n\times n$ stochastic matrix with an
eigenvalue $\lambda\in\Theta_n\setminus\Theta_{n-1}$ whose argument lies
between two consecutive $n$th-root arguments is permutation-similar to a
two-consecutive-shift matrix.  Their Proposition 3.1 says that, for a
degree-$n$ reduced Ito polynomial with parameters $(q,s,d)$ and such a
boundary root, the only possible cycle lengths are $s$ and $kq$,
$1\le k\le d$, and that at least one $s$-cycle and one $q$-cycle occur
\cite[Theorem 3.1 and Proposition 3.1]{KirklandSmigoc2022}.

\item In the Type II case, their Lemma 6.1 applies to an eigenvalue
$\lambda\in K_n(\{q,s\})\setminus\Theta_{n-1}$ and identifies the two
consecutive shifts as $pd$ and $pd+1$.  Hence every Type II realiser is
permutation-similar to a matrix whose positive support is contained in
\begin{equation}
        D C_n^{pd}+(I-D)C_n^{pd+1},
        \label{eq:KSnormal}
\end{equation}
for a diagonal matrix $D$ with entries in $[0,1]$.  Their Lemma 6.2
identifies the $pd$-shift edges as $d$ disjoint $q$-cycles and says that
no $pd+1$ edge lies on a $q$-cycle.  Their Corollary 3.1 and Lemma 6.3
then force all $d$ of those $q$-cycles to occur with common weight
$\beta$, and rule out cycles of length $kq$ for $k\ge2$
\cite[Corollary 3.1 and Lemmas 6.1--6.3]{KirklandSmigoc2022}.  Combining
this with their Proposition 3.1 leaves only cycles of length
$s=qd-z$ besides the $d$ forced $q$-cycles.  Proposition
\ref{prop:blocknormal} below only relabels this imported Type II normal
form into block coordinates; the new finite support argument begins after
that relabelling.

\item From Joshi--Kirkland--\v Smigoc we import only contextual information \cite[Theorem 3.10, Corollary 3.12, and Section 5]{JoshiKirklandSmigoc2023}: their Theorem 3.10 and Corollary 3.12 classify when Karpelevich arcs are powers of other arcs, and their Section 5 treats powers of sparsest realising matrices.  Their results are not used in the proof of Theorem \ref{thm:weighted}; they explain why the non-sparse Type II class classified here is outside the already-treated powers-of-sparsest framework.

\item The companion Type III manuscript is cited only for context; no result from it is used in the present proof \cite{VerbekenGinisTypeIII}.
\end{enumerate}

The next proposition converts the imported Type~II two-shift form \eqref{eq:KSnormal} into the block coordinates used below.

\begin{proposition}[Type II block normal form]
\label{prop:blocknormal}
After permutation similarity, every stochastic realiser of $f_\alpha$ is supported on a weighted two-choice digraph with vertex set
\[
        V=\mathbb Z_d\times\mathbb Z_q,
\]
positive forced horizontal edges
\begin{equation}
        h_{t,i}:(t,i)\longrightarrow(t,i+1),
        \label{eq:horiz}
\end{equation}
possible transfer edges
\begin{equation}
        g_{t,i}:(t,i)\longrightarrow(t+1,i),
        \qquad t=0,\ldots,d-2,
        \label{eq:transfer1}
\end{equation}
and possible final transfer edges
\begin{equation}
        g_{d-1,i}:(d-1,i)\longrightarrow(0,i+c).
        \label{eq:transferlast}
\end{equation}
All second coordinates are read modulo $q$.  Moreover, every horizontal
edge is positive, and in each block the product of the horizontal edge
weights is $\beta$.
\end{proposition}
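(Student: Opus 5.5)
The plan is to start from the imported normal form \eqref{eq:KSnormal}: by (I2) a realiser is permutation-similar to a matrix supported in $DC_n^{pd}+(I-D)C_n^{pd+1}$ on vertex set $\mathbb Z_n$. The edges are the $pd$-shifts $v\to v+pd$ and the $(pd+1)$-shifts $v\to v+pd+1$. We also know that the $pd$-edges form $d$ disjoint $q$-cycles, each with weight $\beta$. The task is then only to find the explicit relabelling $\mathbb Z_n\to\mathbb Z_d\times\mathbb Z_q$ under which these become the edges \eqref{eq:horiz}, \eqref{eq:transfer1} and \eqref{eq:transferlast}.

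First I would describe the cycles of the $pd$-shift. Since $\gcd(p,q)=1$ (from $pz\equiv1$), we have $\gcd(pd,qd)=d$. Hence $v\mapsto v+pd$ on $\mathbb Z_{qd}$ splits into the $d$ residue classes modulo $d$, and each class is a $q$-cycle. So every $pd$-edge lies on one of these $q$-cycles. Lemma 6.2 says these $q$-cycles occur and must be present, so every $pd$-edge is positive and $D$ has all entries in $(0,1]$. The horizontal weights are the entries of $D$, and Corollary 3.1/Lemma 6.3 give a product of $\beta$ on each cycle.

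Next I would define the labelling
\[
(t,i)\longmapsto t+i\,pd \pmod{qd},\qquad t\in\{0,\ldots,d-1\},\ i\in\mathbb Z_q .
\]
This is a bijection: $t$ is the residue modulo $d$, and $i$ is determined modulo $q$ because $pd\,i\equiv pd\,i'\pmod{qd}$ forces $q\mid p(i-i')$. Under this map the horizontal edge is $t+ipd\to t+(i+1)pd$, which is \eqref{eq:horiz}. A $(pd+1)$-edge from $t+ipd$ goes to $(t+1)+(i+1)pd$. For $t\le d-2$ this is the vertex $(t+1,i+1)$. For $t=d-1$ it is $d+(i+1)pd$, and here we must write $d\equiv jpd\pmod{qd}$. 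That is $1\equiv jp\pmod q$, so $j\equiv z$ because $pz\equiv1$. The target is therefore $(0,i+1+z)$.

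This labelling gives transfer targets $(t+1,i+1)$ and $(0,i+1+z)$. To reach \eqref{eq:transfer1} I would add a block-dependent shear of the second coordinate, $i\mapsto i-t$, that is, use the label $(t,i)$ for the vertex $t+(i+t)pd$. Horizontal edges are unchanged. The transfer edge becomes $(t,i)\to(t+1,i)$, and the last block now maps to $(0,i+(d-1)+1+z)=(0,i+d+z)=(0,i+c)$, as required.

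The main place to be careful is this arithmetic in the last block: solving $d\equiv jpd$ modulo $qd$, using the inverse relation $pz\equiv1$ correctly, and checking that the shear produces exactly $c=d+z$ rather than, say, $z-d$. The remaining statements are bookkeeping. Each transfer weight is $1-b_{t,i}$, so the row sums are automatically one. Positivity and the product condition on the horizontal edges come directly from (I2).
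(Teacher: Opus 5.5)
Your proof is correct and is essentially the paper's: after the shear, your labelling $(t,i)\mapsto t+(i+t)pd=t(pd+1)+ipd \pmod{qd}$ is exactly the paper's map $\phi$, and your last-block step $d\equiv z\,pd \pmod{qd}$ is the paper's congruence $d(pd+1)\equiv(d+z)pd \pmod{qd}$ written before the shear. Both arguments take positivity of the horizontal edges and the block product $\beta$ from the same imported Kirkland--\v Smigoc input (I2); strictly, the presence of all $d$ horizontal cycles with weight $\beta$ is the content of Corollary 3.1 and Lemma 6.3 rather than of Lemma 6.2 alone.
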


\begin{proof}
Use zero-based labels in $\mathbb Z_n$.  Define
\[
        \phi:\mathbb Z_d\times\mathbb Z_q\longrightarrow\mathbb Z_n,
        \qquad
        \phi(t,i)=t(pd+1)+ipd \pmod n .
\]
This map is bijective.  Indeed, if $t(pd+1)+ipd\equiv 0\pmod {qd}$, then reduction modulo $d$ gives $t\equiv0\pmod d$, and then $ipd\equiv0\pmod {qd}$ gives $i\equiv0\pmod q$ because $\gcd(p,q)=1$.  Applying this zero-kernel argument to the difference of two pairs shows that $\phi$ is injective, hence bijective.

Under this relabelling, the shift $pd$ becomes
\[
        \phi(t,i)+pd=\phi(t,i+1),
\]
so it gives the horizontal edges \eqref{eq:horiz}.  For $t=0,\ldots,d-2$, the shift $pd+1$ becomes
\[
        \phi(t,i)+pd+1=\phi(t+1,i),
\]
so it gives \eqref{eq:transfer1}.  For the last block,
\[
        \phi(d-1,i)+pd+1=d(pd+1)+ipd .
\]
Using \eqref{eq:pz},
\[
        d(pd+1)\equiv (d+z)pd\pmod {qd},
\]
and hence
\[
        \phi(d-1,i)+pd+1=\phi(0,i+d+z),
\]
which gives \eqref{eq:transferlast}.  Finally, since $0<\beta<1$, the Type II consequences imported above from
Kirkland--\v Smigoc force each of the $d$ horizontal $q$-cycles to occur
with positive product $\beta$.  Thus no horizontal edge on these cycles is
zero.  Lemma 6.2 identifies the horizontal cycles, while Corollary 3.1 and
Lemma 6.3 force their common weight to be $\beta$ and rule out the other
$kq$-cycles.
\end{proof}

Let
\[
        S_t\subseteq\mathbb Z_q
\]
be the set of indices $i$ for which the transfer edge $g_{t,i}$ is present.  The sets $S_t$ are nonempty in every non-degenerate Type II realiser, because the horizontal $q$-cycle in each block has weight $\beta<1$.

By the reduction input just stated, after the $d$ forced horizontal $q$-cycles have been accounted for, every other directed cycle in a full-degree Type II realiser has length
\[
        s=qd-z .
\]
The remaining non-sparse Type II question is therefore the following finite combinatorial support problem.

\begin{definition}[Type II admissible support]
\label{def:admissible}
A family of nonempty sets
\[
        S_0,\ldots,S_{d-1}\subseteq\mathbb Z_q
\]
is called Type II admissible if the two-choice digraph \eqref{eq:horiz}--\eqref{eq:transferlast} has exactly the $d$ horizontal $q$-cycles and every other directed cycle has length $s=qd-z$.
\end{definition}

Sections \ref{sec:support} and \ref{sec:weighted} solve Definition \ref{def:admissible} and then put the weights back in.

\section{The Type II support theorem}
\label{sec:support}

\subsection{Cycle lengths in the two-choice graph}

A directed cycle that uses transfer edges must use the same number of transfer edges in each block, because every transfer edge advances the first coordinate by one modulo $d$.  The following elementary observation justifies using arbitrary transfer tuples in the necessity proof.

\begin{lemma}[Transfer tuples are simple cycles]
\label{lem:tuple-simple}
Let $S_0,\ldots,S_{d-1}$ be nonempty transfer sets in the two-choice graph \eqref{eq:horiz}--\eqref{eq:transferlast}.  For every tuple
\[
        (i_0,\ldots,i_{d-1})\in S_0\times\cdots\times S_{d-1},
\]
there is a simple directed cycle using exactly the transfer edges
\[
        g_{0,i_0},g_{1,i_1},\ldots,g_{d-1,i_{d-1}}.
\]
Its length is
\begin{equation}
        L(i_0,\ldots,i_{d-1})
        =d+\sum_{t=1}^{d-1}\langle i_t-i_{t-1}\rangle_q
          +\langle i_0-i_{d-1}-c\rangle_q .
        \label{eq:lengthone}
\end{equation}
\end{lemma}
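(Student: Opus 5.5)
The plan is to build the cycle explicitly and then check that it is simple. Fix $(i_0,\ldots,i_{d-1})$. Start at the vertex $(1,i_0)$, which is the head of the transfer edge $g_{0,i_0}$.

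In block $t$ for $1\le t\le d-1$, the cycle enters at $(t,i_{t-1})$. It then follows horizontal edges $h_{t,\cdot}$ forward, taking exactly $\langle i_t-i_{t-1}\rangle_q$ steps to reach $(t,i_t)$, and leaves along $g_{t,i_t}$. The final transfer $g_{d-1,i_{d-1}}$ lands at $(0,i_{d-1}+c)$. In block $0$ the walk moves horizontally $\langle i_0-i_{d-1}-c\rangle_q$ steps to $(0,i_0)$ and closes with $g_{0,i_0}$.

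Since all horizontal edges are present by Proposition \ref{prop:blocknormal}, and each $i_t\in S_t$ guarantees that the corresponding transfer edge exists, this is a closed directed walk. Its edge count is $d$ transfer edges plus the horizontal step counts, which is exactly \eqref{eq:lengthone}.

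Simplicity comes next. The walk visits block $t$ during exactly one contiguous stretch. The vertices visited there are $(t,i_{t-1}),(t,i_{t-1}+1),\ldots,(t,i_t)$ (with the obvious modification for block $0$). Because we use the least nonnegative residue, this stretch has at most $\langle\cdot\rangle_q+1\le q$ distinct vertices, all consecutive in $\mathbb Z_q$ and distinct. Vertices in different blocks are trivially distinct.

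The first coordinate strictly cycles $1,2,\ldots,d-1,0$ and the walk returns to block $1$ only through the closing edge. So each block is entered exactly once, no vertex repeats, and the walk is a simple cycle. It also uses exactly the transfer edges $g_{t,i_t}$, one per block, since inside a block only horizontal edges are taken.

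The only delicate point is the wrap-around in block $0$, together with the fact that residues lie in $\{0,\ldots,q-1\}$. A horizontal run of $q$ steps would revisit its entry vertex, and the choice of least residues excludes this. When a residue is $0$, the entry and exit vertex coincide; the block then contributes a single vertex and no horizontal edge, which is still consistent. With $d\ge2$, distinct blocks separate the transfer endpoints, so there are no further degenerate cases.
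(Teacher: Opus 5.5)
Your proof is correct and follows the paper's argument: you build the closed walk block by block, bound each horizontal segment by $q-1$ steps using least nonnegative residues, and get simplicity from the strict advance of the block index. Starting at $(1,i_0)$ instead of $(0,i_0)$ is only cosmetic; it lets block $0$ be visited in one contiguous stretch, so you do not need the paper's separate remark that $(0,i_0)$ reappears only at the end of the closing segment.
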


\begin{proof}
Start at $(0,i_0)$, take the transfer edge $g_{0,i_0}$ to $(1,i_0)$, then follow horizontal edges in block $1$ until $(1,i_1)$ is reached.  Continue in this way: after $g_{t-1,i_{t-1}}$ the walk is at $(t,i_{t-1})$, follows the horizontal $q$-cycle in block $t$ for $\langle i_t-i_{t-1}\rangle_q$ steps until $(t,i_t)$, and then takes $g_{t,i_t}$, for $t=1,\ldots,d-1$.  The last transfer edge lands at $(0,i_{d-1}+c)$, and the walk closes by following the horizontal edges in block $0$ for $\langle i_0-i_{d-1}-c\rangle_q$ steps back to $(0,i_0)$.

Each horizontal segment has length at most $q-1$, so it has no repeated vertex inside its own block.  No block other than block $0$ is visited more than once, because the first coordinate strictly advances through $1,2,\ldots,d-1$ before returning to $0$.  In block $0$, the starting vertex $(0,i_0)$ reappears only as the terminal vertex of the closing segment; the closing segment also has length at most $q-1$, so it cannot wrap once around the whole horizontal $q$-cycle before closing.  Hence the closed walk is a simple directed cycle.  Counting the $d$ transfer edges and the displayed horizontal segment lengths gives \eqref{eq:lengthone}.
\end{proof}

Such a tuple-cycle has Type II length $s=qd-z$ precisely when
\begin{equation}
        \sum_{t=1}^{d-1}\langle i_t-i_{t-1}\rangle_q
          +\langle i_0-i_{d-1}-c\rangle_q
        =dq-c .
        \label{eq:constantcarry}
\end{equation}
We call \eqref{eq:constantcarry} the constant-carry identity.

\begin{lemma}[Non-horizontal cycles use one transfer edge per block]
\label{lem:kone}
If a Type II admissible support has a directed cycle using at least one transfer edge, and if it uses $k$ transfer edges from each block, then $k=1$.
\end{lemma}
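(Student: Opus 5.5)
The plan is to prove the statement with a single congruence: compare the total displacement of the second coordinate around the cycle with the cycle length, read modulo $q$. Let $C$ be a directed cycle in the two-choice digraph \eqref{eq:horiz}--\eqref{eq:transferlast} that uses at least one transfer edge. As noted before Lemma~\ref{lem:tuple-simple}, every transfer edge advances the first coordinate by one modulo $d$, and horizontal edges do not change it. So $C$ uses the same number $k\ge1$ of transfer edges from each block, and $kd$ transfer edges in total. Let $H$ be the number of horizontal edges of $C$, so that its length is $L=H+kd$.

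First, I will track the second coordinate around $C$.
\begin{itemize}
\item Each horizontal edge $h_{t,i}$ adds $1$.
\item Each transfer edge $g_{t,i}$ with $t\le d-2$ adds $0$.
\item Each final transfer edge $g_{d-1,i}$ adds $c=d+z$.
\end{itemize}
Exactly $k$ of the transfer edges of $C$ are final ones. Since $C$ is closed, the net displacement vanishes in $\mathbb Z_q$:
\[
        H+kc\equiv 0\pmod q .
\]
Substituting $H=L-kd$ gives $L\equiv kd-kc=-kz\pmod q$.

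Second, I will invoke admissibility (Definition~\ref{def:admissible}). The cycle $C$ uses a transfer edge, so it is not one of the $d$ horizontal $q$-cycles. Hence its length is $L=s=qd-z\equiv -z\pmod q$. Comparing the two congruences gives $q\mid (k-1)z$. Since $\gcd(q,z)=1$, this yields $q\mid k-1$.

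Third, I will bound $k$. The $k$ transfer edges of $C$ leaving block $0$ start at distinct vertices of block $0$, because $C$ is simple. Block $0$ has only $q$ vertices, so $1\le k\le q$ and $0\le k-1\le q-1$. The only multiple of $q$ in this range is $0$, so $k=1$.

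I do not expect a real obstacle here. The only point needing care is the displacement bookkeeping for the final transfer edge, which shifts the second coordinate by $c$; this is exactly what the relabelling in Proposition~\ref{prop:blocknormal} provides. With that in place, the coprimality hypothesis $\gcd(q,z)=1$ does the rest, as anticipated in the introduction.
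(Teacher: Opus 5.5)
Your proof is correct and is essentially the paper's own argument. You combine the mod-$q$ closure congruence $H+kc\equiv 0\pmod q$ with the admissible length $s=qd-z$ to get $q\mid(k-1)z$, and then coprimality together with the bound $k\le q$ (a block has only $q$ vertices) forces $k=1$.
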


\begin{proof}
Let $h$ be the number of horizontal edges in such a cycle.  The length of the cycle is $kd+h$.  Because the cycle uses a transfer edge, it is not one of the horizontal $q$-cycles.  Since the support is Type II admissible, its length is therefore $s=qd-z$.  On the other hand, each full passage through the $d$ blocks shifts the second coordinate by $c=d+z$, while each horizontal edge shifts it by one.  Closure modulo $q$ gives
\[
        h+kc\equiv0\pmod q .
\]
Using $kd+h=qd-z$, we obtain
\[
        h+kc=qd-z-kd+k(d+z)=qd+(k-1)z .
\]
Therefore $q\mid (k-1)z$.  Since $\gcd(q,z)=1$, $k\equiv1\pmod q$.  A simple cycle cannot use more than $q$ transfer edges from one block, because a block has only $q$ vertices.  Hence $k=1$.
\end{proof}

Combining Lemma \ref{lem:tuple-simple} with admissibility, every tuple $(i_0,\ldots,i_{d-1})\in S_0\times\cdots\times S_{d-1}$ in a Type II admissible support satisfies \eqref{eq:constantcarry}.

\subsection{A circular separation lemma}

For $a,b\in\mathbb Z_q$, write
\[
        D(a,b)=\langle b-a\rangle_q .
\]
The directed arc from $a$ to $b$ is
\[
        [a,b]_q:=\{a,a+1,\ldots,a+D(a,b)\}\subseteq\mathbb Z_q .
\]
A point $r\in\mathbb Z_q$ is a common blocker for a family of arcs if $r$ belongs to every arc in the family.
The four-point identity below is a cyclic equality form of the classical Monge property; see \cite{BurkardKlinzRudolf1996} for background on Monge structures in combinatorial optimisation.

\begin{lemma}[Circular Monge separation]
\label{lem:separation}
Let $A,B\subseteq\mathbb Z_q$ be nonempty.  The following are equivalent.
\begin{enumerate}[label=\textup{(\roman*)}]
\item For all $a,a'\in A$ and $b,b'\in B$,
\begin{equation}
        D(a,b)+D(a',b')=D(a,b')+D(a',b).
        \label{eq:monge}
\end{equation}
\item The family of directed arcs $[a,b]_q$, $a\in A$, $b\in B$, has a common blocker.
\item There are integer representatives $\widehat a\equiv a\pmod q$, $\widehat b\equiv b\pmod q$, chosen for all $a\in A$ and $b\in B$, such that
\begin{equation}
        \widehat b<\widehat a\le \widehat b+q
        \qquad(a\in A,\ b\in B),
        \label{eq:separated-lifts-order}
\end{equation}
 and hence
\begin{equation}
        D(a,b)=q+\widehat b-\widehat a
        \qquad(a\in A,\ b\in B).
        \label{eq:separated-lifts-distance}
\end{equation}
\item There are integer intervals $I_B=[u,v]$ and $I_A=[v+1,w]$, with $w-u\le q$, whose residue classes contain $B$ and $A$, respectively.  Equivalently, $B$ and $A$ are contained in consecutive cyclic intervals whose total cardinality is at most $q+1$, with the interval containing $B$ immediately preceding the interval containing $A$.
\end{enumerate}
\end{lemma}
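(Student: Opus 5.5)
The plan is to prove the implications in the order (i)$\Rightarrow$(iii)$\Rightarrow$(ii)$\Rightarrow$(iii), then (iii)$\Leftrightarrow$(iv), and finally (iii)$\Rightarrow$(i). The implication (iii)$\Rightarrow$(i) should be immediate. By \eqref{eq:separated-lifts-distance} both sides of \eqref{eq:monge} equal $2q+\widehat b+\widehat b'-\widehat a-\widehat a'$.

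For (i)$\Rightarrow$(iii) I will construct the lifts explicitly from a base pair. Fix $a_0\in A$, $b_0\in B$ and an integer lift $\widehat a_0$ of $a_0$. Then define
\[
\widehat b:=\widehat a_0-q+D(a_0,b)\quad(b\in B),\qquad
\widehat a:=\widehat b_0+q-D(a,b_0)\quad(a\in A),
\]
where $\widehat b_0$ is given by the first formula with $b=b_0$. These are consistent at $a_0$ and $b_0$, and they are congruent to $b$ and $a$ modulo $q$. The next step is to compute
\[
q+\widehat b-\widehat a=D(a_0,b)+D(a,b_0)-D(a_0,b_0).
\]
Then I apply \eqref{eq:monge} to the quadruple $a,a_0,b,b_0$. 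This shows the right-hand side equals $D(a,b)$, which is \eqref{eq:separated-lifts-distance}. Since $0\le D(a,b)\le q-1$, we get $1\le \widehat a-\widehat b\le q$, which is exactly \eqref{eq:separated-lifts-order}. This explicit construction is the crux of the lemma. Once one sees that the Monge identity says $D(a,b)$ splits as a function of $a$ plus a function of $b$, the rest is bookkeeping.

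For (iii)$\Rightarrow$(ii), each arc $[a,b]_q$ is the image of the integer interval $[\widehat a,\widehat b+q]$, because its length is $D(a,b)=\widehat b+q-\widehat a$. The order condition gives $\max_A\widehat a\le \min_B\widehat b+q$. So $r:=\max_A\widehat a \bmod q$ lies in every arc.

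For (ii)$\Rightarrow$(iii), take an integer lift $\widehat r$ of the blocker. For each $a\in A$, let $\widehat a$ be its lift in $(\widehat r-q,\widehat r]$. For each $b\in B$, let $\widehat b$ be the lift with $\widehat b+q\in[\widehat r,\widehat r+q-1]$. Consider the lift $\widehat a+D(a,b)$ of $b$ that ends the integer arc starting at $\widehat a$. It satisfies $\widehat a+D(a,b)\le \widehat r+q-1$. Because the arc contains the blocker and $\widehat a\le\widehat r$, it also satisfies $\widehat a+D(a,b)\ge\widehat r$. By uniqueness of the lift in that window, $\widehat a+D(a,b)=\widehat b+q$. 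This gives \eqref{eq:separated-lifts-distance}, and the order condition follows from $0\le D(a,b)\le q-1$. The one point needing care is the case where $r$ equals an endpoint; the half-open windows are chosen so that this case is covered.

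Finally, for (iii)$\Rightarrow$(iv) set $u=\min_B\widehat b$, $v=\max_B\widehat b$ and $w=u+q$. The order condition with $b$ ranging over $B$ puts every $\widehat a$ in $[v+1,u+q]$. For (iv)$\Rightarrow$(iii), choose $\widehat b\in[u,v]$ and $\widehat a\in[v+1,w]$. Then $\widehat b<\widehat a\le w\le u+q\le\widehat b+q$. The cyclic reformulation in (iv) is just the reduction of these integer intervals modulo $q$.
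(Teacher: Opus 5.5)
Your proof is correct. It differs from the paper's only at the one substantive implication: your (ii)$\Rightarrow$(iii), (iii)$\Rightarrow$(ii), (iii)$\Leftrightarrow$(iv) and (iii)$\Rightarrow$(i) match the paper up to notation.

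The paper proves (i)$\Rightarrow$(ii) by contraposition. Assuming no common blocker, it takes an arc of minimal length and rotates it to $\{0,\ldots,m\}$. A case analysis on arcs missing $0$ and $m$, and on their cross arcs, then extracts two disjoint arcs $[a,b]_q,[a',b']_q$ of the family. For these, $D(a,b')+D(a',b)$ exceeds $D(a,b)+D(a',b')$ by exactly $q$, so \eqref{eq:monge} fails.

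You instead go directly to (iii). You read \eqref{eq:monge} as the additive splitting $D(a,b)=D(a_0,b)+D(a,b_0)-D(a_0,b_0)$ about a base pair and define the lifts as potentials. The range $0\le D\le q-1$ then gives the order condition immediately. This is shorter and avoids the case analysis. It is also the same base-point construction the paper later uses to propagate the lifts $X_t$ in the necessity half of Theorem~\ref{thm:support}.

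What the paper's route buys is the explicit dichotomy ``common blocker or two disjoint arcs''. The paper cites this as ``the contrapositive part of Lemma~\ref{lem:separation}'' in the $d=2$ case of that necessity proof. With your approach you would recover it with one extra remark:
\begin{itemize}
\item The Monge defect $D(a,b)+D(a',b')-D(a,b')-D(a',b)$ is a multiple of $q$ of absolute value at most $2q-2$, so it lies in $\{-q,0,q\}$.
\item After rotating $a$ to $0$, a defect of $-q$ forces $0=a\le b<a'\le b'<q$, so $[a,b]_q$ and $[a',b']_q$ are disjoint.
\item A defect of $+q$ gives the same for the crossed pair $[a,b']_q,[a',b]_q$.
\end{itemize}
Hence any failure of (i) exhibits two disjoint arcs.
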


\begin{proof}
Assume first that $\rho$ is a common blocker.  Rotate the circle so that $\rho=0$.  For $b\in B$, take $\widehat b\in\{0,1,\ldots,q-1\}$.  For $a\in A$, take $\widehat a\in\{1,\ldots,q\}$, using the representative $q$ when $a=0$.  Since $0\in[a,b]_q$ for every $a,b$, these representatives satisfy $\widehat b<\widehat a\le\widehat b+q$, and the directed distance from $a$ to $b$ is exactly $q+\widehat b-\widehat a$.  Thus (ii) implies (iii).  Under (iii), both sides of \eqref{eq:monge} equal
\[
        2q+\widehat b+\widehat b'-\widehat a-\widehat a',
\]
so (iii) implies (i).  Condition (iii) also gives a common blocker: if $M=\max\{\widehat a:a\in A\}$, then for every $a\in A$ and $b\in B$ the lifted directed path from $\widehat a$ to $\widehat b+q$ contains $M$.  Hence the residue class of $M$ lies in every $[a,b]_q$.

It remains only to prove that (i) forces a common blocker.  We prove the contrapositive.  Suppose the arcs $[a,b]_q$ have no common blocker.  Choose one of them, say $I=[a_0,b_0]_q$, with $D(a_0,b_0)$ minimal.  Rotate and lift so that
\[
        a_0=0,
        \qquad b_0=m,
        \qquad 0\le m\le q-1,
\]
so $I=\{0,1,\ldots,m\}$.  If some arc in the family is disjoint from $I$, we already have two disjoint arcs.  Otherwise every arc meets $I$.  Since the intersection of the whole family is empty and is contained in $I$, there is an arc $J_R=[u,v]_q$ missing $0$ and an arc $J_L=[w,y]_q$ missing $m$.  Minimality of $I$ and the assumption that all arcs meet $I$ force, in the chosen lift,
\[
        0<u\le m\le v<q,
        \qquad
        m<w<q,
        \qquad
        0\le y<m .
\]
The cross arc $[u,y]_q$ also belongs to the family.  If $u\le y$, then $D(u,y)<m$, contradicting minimality; hence $y<u$.  If $w\le v$, then the other cross arc $[w,v]_q$ is contained in the complement of $I$, so it is disjoint from $I$.  If $v<w$, then $J_R$ and $J_L$ are disjoint, because $y<u\le m\le v<w$.  Thus in all cases there are arcs $[a,b]_q$ and $[a',b']_q$ in the family that are disjoint.

Cut the circle before the first of these disjoint arcs and choose lifts with
\[
        a\le b<a'\le b'<a+q .
\]
Then
\[
\begin{aligned}
        D(a,b)+D(a',b')&=(b-a)+(b'-a'),\\
        D(a,b')+D(a',b)&=(b'-a)+(b+q-a')
\end{aligned}
\]
and the second expression is larger than the first by $q$.  Hence \eqref{eq:monge} fails.  This proves (i)$\Rightarrow$(ii).

Finally, (iii) and (iv) are the same separation written with or without explicit representatives.  From (iii), let $u=\min\widehat B$, $v=\max\widehat B$, and $w=\max\widehat A$; then every representative of $A$ is at least $v+1$, and $w-u\le q$.  Conversely, representatives in intervals $[u,v]$ and $[v+1,w]$ with $w-u\le q$ satisfy \eqref{eq:separated-lifts-order} and \eqref{eq:separated-lifts-distance}.
\end{proof}

\subsection{The support classification}

We state the pure support theorem in composition form. The carry notation used in the proof is recorded separately after the theorem.

\begin{theorem}[Type II support theorem]
\label{thm:support}
Let $q\ge2$, $d\ge2$, $1\le z\le q-1$, and $\gcd(q,z)=1$. Put $c=d+z$ and $s=qd-z$. Let $S_0,\ldots,S_{d-1}$ be nonempty subsets of $\mathbb Z_q$ in the Type II two-choice digraph \eqref{eq:horiz}--\eqref{eq:transferlast}. Then the support is Type II admissible if and only if there exist
\[
        a_0,\ldots,a_{d-1}\in\mathbb Z_{\ge0},
        \qquad a_0+\cdots+a_{d-1}=z,
\]
and $\rho\in\mathbb Z_q$ such that
\begin{equation}
        S_0\subseteq [\rho,\rho+a_0]_q,
        \label{eq:S0a}
\end{equation}
\begin{equation}
        S_t\subseteq
        \left[
        \rho-\sum_{j=1}^t(a_j+1),\,
        \rho-\sum_{j=1}^t(a_j+1)+a_t
        \right]_q,
        \qquad t=1,\ldots,d-1 .
        \label{eq:Sta}
\end{equation}
\end{theorem}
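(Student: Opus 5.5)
The plan is to reduce admissibility to a statement about the transfer tuples of Lemma~\ref{lem:tuple-simple}, and then to apply Lemma~\ref{lem:separation} to each pair of consecutive blocks.

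For $t=1,\ldots,d-1$ put $A_t=S_{t-1}$, the set of arrival positions in block $t$. Put $A_0=S_{d-1}+c$, the arrival positions in block $0$. A tuple cycle then consists of the $d$ transfer edges together with one horizontal segment in each block $t$. That segment runs from some $a\in A_t$ to some $b\in S_t$ and traces out exactly the vertex set of the arc $[a,b]_q$. In this notation \eqref{eq:lengthone} reads $L=d+\sum_t D(a_t,b_t)$.

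\emph{Necessity.} Assume the support is admissible. By Lemma~\ref{lem:kone} together with Lemma~\ref{lem:tuple-simple}, every tuple satisfies the constant-carry identity \eqref{eq:constantcarry}. Fix all coordinates except $i_{t-1}$ and $i_t$, and compare the four tuples obtained from choices $i_{t-1}\in\{x,x'\}$ and $i_t\in\{y,y'\}$. Only the $D(A_{t-1},S_{t-1})$-term, the $D(A_t,S_t)$-term and the $D(A_{t+1},S_{t+1})$-term can change, and the outer two terms cancel in the alternating sum. This shows that the pair $(A_t,S_t)$ satisfies the Monge equality \eqref{eq:monge}, for every $t$.

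Lemma~\ref{lem:separation}(iv) then gives, for every block $t$, consecutive integer intervals $I_{B,t}\supseteq S_t$ and $I_{A,t}\supseteq A_t$, with $I_{B,t}$ immediately preceding $I_{A,t}$ and total length at most $q+1$. The next step, which I expect to be the main obstacle, is to glue these $d$ local separations into one global chain. I would proceed as follows.
\begin{enumerate}
\item Choose lifts $\widehat i$ of the elements of the $S_t$ so that the ranges of the lifted sets are consecutive and decreasing. That is, $\widehat S_t$ sits just below $\widehat A_t=\widehat S_{t-1}$, which is the geometry of the intervals \eqref{eq:Sta}.
\item Use \eqref{eq:separated-lifts-distance} to rewrite the constant-carry identity as an exact telescoping equation. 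Going once around the block cycle, the wrap via $+c$ must then have total lifted displacement $c=d+z$.
\item Let $a_t+1$ be the lifted gap between the starting points of the consecutive intervals, and take $\rho$ to be the left end of the lifted interval containing $S_0$. Each $a_t$ is at least the width of the data actually present, and the telescoping equation gives $\sum_t(a_t+1)=c$, that is, $\sum_t a_t=z$.
\item If the intervals given by the separation lemma are shorter than needed, enlarge them (pad) so that the widths are exactly $a_t$.
\end{enumerate}
The care is all in step 2: the additive constants $q$ in \eqref{eq:separated-lifts-distance} must be matched with the $dq$ in \eqref{eq:constantcarry}, and the block-$0$ shift by $c$ must be handled correctly.

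\emph{Sufficiency.} Given $a_t$ and $\rho$, I would write down explicit lifts, namely $\widehat S_0\subseteq[\rho,\rho+a_0]$ and $\widehat S_t\subseteq[\rho-\sum_{j\le t}(a_j+1),\ \rho-\sum_{j\le t}(a_j+1)+a_t]$, and check two things.
\begin{itemize}
\item Each pair $(A_t,S_t)$ satisfies \eqref{eq:separated-lifts-order}. This uses $a_{t-1}+a_t+1\le z+1\le q$ for the length bound; for block $0$ it uses the shift by $c$ and $\sum_t a_t=z$.
\item Summing \eqref{eq:separated-lifts-distance} over the blocks gives \eqref{eq:constantcarry}, so every tuple cycle has length $s$.
\end{itemize}

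It then remains to show that there are no other cycles. Take a cycle using $k\ge1$ transfer edges per block. In block $t$ it has $k$ vertex-disjoint horizontal segments, each equal as a vertex set to an arc $[a,b]_q$ with $a\in A_t$ and $b\in S_t$. By Lemma~\ref{lem:separation}(ii) all of these arcs contain a common blocker, so two vertex-disjoint segments in the same block cannot both occur. Hence $k=1$. Every cycle with one transfer edge per block is a tuple cycle from Lemma~\ref{lem:tuple-simple}, because its segments are determined by the chosen transfer positions. A cycle with no transfer edges stays in one block, and is therefore a horizontal $q$-cycle. This shows admissibility.
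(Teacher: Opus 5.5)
Your sufficiency direction, and the overall architecture of necessity (Monge identity for each adjacent pair, the separation lemma, then lifting and telescoping), match the paper. However, the necessity argument breaks down for $d=2$, which the theorem includes.

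Your four-corner step fixes every coordinate except $i_{t-1},i_t$ and cancels the block-$(t-1)$ and block-$(t+1)$ terms. When $d=2$ these are one and the same term. For $t=1$ it is $D(i_1+c,i_0)$, and it depends on both varied coordinates. The alternating sum then only shows that $D(a,b)+D(b+c,a)$ satisfies the Monge identity, which says nothing because this sum is constant.

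This is not a bookkeeping slip. For $d=2$, the constant-carry identity over tuples does not imply the conclusion at all. Take $q=2$, $d=2$, $z=1$ (so $c=s=3$) and $S_0=S_1=\mathbb Z_2$. All four tuple cycles have length $3=s$. Yet $(0,0)\to(1,0)\to(0,1)\to(1,1)\to(0,0)$ is a $4$-cycle, so the support is not admissible. Consistently, no interval presentation exists, since it would need $a_0,a_1\ge1$ with $a_0+a_1=1$. More generally, if $d=2$ and $z=q-1$, then $c=q+1$ and every tuple satisfies the constant-carry identity automatically.

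The missing idea is to use admissibility beyond the tuple cycles, via Lemma~\ref{lem:kone}: no simple cycle uses two transfer edges from each block. The paper proceeds as follows.
\begin{itemize}
\item From the one-transfer length condition it extracts the threshold $D(a,b)>q-c$ for $a\in S_0$, $b\in S_1$.
\item If the arcs $[a,b]_q$ had no common blocker, Lemma~\ref{lem:separation} would give two disjoint ones, lifted as $0=a\le b<a'\le b'<q$.
\item The threshold together with $c\le q+1$ shows that the crossed block-$0$ arcs, with lifts $[b+c-q,a']$ and $[b'+c-q,q]$, are also disjoint. Hence
\[
\begin{aligned}
&(0,a)\to(1,a)\leadsto(1,b)\to(0,b+c)\leadsto(0,a')\\
&\quad\to(1,a')\leadsto(1,b')\to(0,b'+c)\leadsto(0,a)
\end{aligned}
\]
is a simple cycle with $k=2$, a contradiction.
\item This gives the Monge identity for $(S_0,S_1)$. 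The other pair follows from $D(b,a-c)=2q-c-D(a,b)$.
\end{itemize}

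Your gluing step also needs one more sentence. The two local separations that share $S_t$ induce lifts of $S_t$ that agree up to translation. This holds precisely because the constant-carry sum is unchanged when only $i_t$ varies. The paper makes this rigorous by taking separated lifts for the first pair only and propagating
\[
X_t(x_t)=X_{t-1}(x^*_{t-1})+E_t(x^*_{t-1},x_t)-q
\]
around the cycle. This produces the separations of all remaining pairs as a by-product, so the only genuinely delicate input is the Monge identity for a single pair.
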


\begin{proof}
\noindent\emph{Sufficiency.}
Assume that the displayed composition and interval conditions hold, and put
\[
        m_t=a_t+1,
        \qquad t\in\mathbb Z_d.
\]
Then
\[
        \sum_{t=0}^{d-1}m_t=d+z=c,
        \qquad
        m_t\le z+1\le q,
        \qquad
        m_{t-1}+m_t=2+a_{t-1}+a_t\le 2+z\le q+1,
\]
so the bounds used below are automatic consequences of the composition. Choose integer lifts
\[
        R_0=\rho,
        \qquad
        R_t=\rho-\sum_{j=1}^t m_j\quad (t=1,\ldots,d-1),
\]
and
\[
        R_d=R_0-(d+z)=R_0-c .
\]
Then
\begin{equation}
        S_t\subseteq [R_t,R_{t-1}-1]_q,
        \qquad t=1,\ldots,d-1,
        \label{eq:suff-incl1}
\end{equation}
and, after translating the interval for $S_0$ by $-c$,
\begin{equation}
        S_0-c\subseteq [R_d,R_{d-1}-1]_q .
        \label{eq:suff-incl0}
\end{equation}
For each adjacent pair of transfer sets, the target set and the source set are contained in consecutive cyclic intervals of lengths $m_t$ and $m_{t-1}$, with indices read modulo $d$. By the preceding local bound and Lemma \ref{lem:separation}, all horizontal arcs in that block have a common blocker. Explicitly, this applies to the arcs from $S_{t-1}$ to $S_t$ for $t=1,\ldots,d-1$, and to the arcs from $S_{d-1}$ to $S_0-c$, equivalently from $S_{d-1}+c$ to $S_0$, in block $0$.

A simple directed cycle cannot use two transfer edges with target in the
same block.  Indeed, between the two transfer targets and the two subsequent
transfer tails, the cycle would contain two horizontal subpaths in that
block with disjoint vertex sets; Lemma \ref{lem:separation} gives a common
blocker for all such subpaths, a contradiction.  Since the number of
transfer edges used in each block is the same, every transfer cycle uses
exactly one transfer edge from each block.

It remains to compute its length.  Let $i_t\in S_t$.  Choose lifts so that
\[
        \widetilde i_0\in [R_0,R_0+m_0-1],
        \qquad
        \widetilde i_t\in [R_t,R_{t-1}-1]\quad(t=1,\ldots,d-1),
\]
so that $\widetilde i_0-c\in [R_d,R_{d-1}-1]$. The interval inclusions, together with the preceding local bound, give
\[
        1\le \widetilde i_{t-1}-\widetilde i_t
        \le m_{t-1}+m_t-1\le q,
        \qquad t=1,\ldots,d-1,
\]
and
\[
        1\le \widetilde i_{d-1}-(\widetilde i_0-c)
        \le m_{d-1}+m_0-1\le q.
\]
Hence
\[
        \langle i_t-i_{t-1}\rangle_q=q+\widetilde i_t-\widetilde i_{t-1},
        \qquad t=1,\ldots,d-1,
\]
and
\[
        \langle i_0-i_{d-1}-c\rangle_q=q+\widetilde i_0-c-\widetilde i_{d-1}.
\]
Substitution in \eqref{eq:lengthone} gives
\[
\begin{aligned}
        L
        &=d+\sum_{t=1}^{d-1}(q+\widetilde i_t-\widetilde i_{t-1})
          +(q+\widetilde i_0-c-\widetilde i_{d-1})  \\
        &=d+dq-c
        =qd-z .
\end{aligned}
\]
Thus every transfer cycle has length $s=qd-z$, and the only cycles without transfer edges are the $d$ horizontal $q$-cycles.  The support is admissible.

\medskip
\noindent\emph{Necessity.}
Assume the support is Type II admissible.  By Lemma \ref{lem:tuple-simple}, every tuple $(i_0,\ldots,i_{d-1})\in S_0\times\cdots\times S_{d-1}$ gives a simple directed transfer cycle using exactly one transfer edge from each block.  Since the support is admissible, every such cycle has length $s=qd-z$.  Consequently \eqref{eq:constantcarry} holds for every tuple.

It is convenient to put
\[
        T_t=S_t\quad(0\le t\le d-1),
        \qquad
        T_d=S_0-c,
\]
and, for $t=1,\ldots,d$, to write
\[
        E_t(x,y)=D(x,y)=\langle y-x\rangle_q,
        \qquad x\in T_{t-1},\ y\in T_t .
\]
Here an element $y\in T_d$ is always of the form $y=i_0-c$ with $i_0\in S_0$.  The constant-carry identity becomes
\begin{equation}
        \sum_{t=1}^d E_t(x_{t-1},x_t)=dq-c,
        \qquad x_t\in T_t,
        \qquad x_d=x_0-c .
        \label{eq:Tconstant}
\end{equation}

We first prove that every adjacent pair $(T_{t-1},T_t)$ satisfies the Monge identity \eqref{eq:monge}.  Suppose $d\ge3$.  Fix $t\in\{1,\ldots,d\}$, and choose
\[
        a,a'\in T_{t-1},
        \qquad
        b,b'\in T_t .
\]
We show that
\[
        E_t(a,b)+E_t(a',b')=E_t(a,b')+E_t(a',b).
\]
There are three cases, because the cyclic constraint $x_d=x_0-c$ makes the endpoints look slightly different.

If $2\le t\le d-1$, fix all variables except $x_{t-1}$ and $x_t$; in particular fix $u\in T_{t-2}$ and $v\in T_{t+1}$.  In \eqref{eq:Tconstant}, the part depending on $x_{t-1}=a$ and $x_t=b$ is
\[
        E_{t-1}(u,a)+E_t(a,b)+E_{t+1}(b,v)+C,
\]
where $C$ is independent of $a,b$.  Since the whole sum in \eqref{eq:Tconstant} is the same for the four choices $(a,b)$, $(a,b')$, $(a',b)$, $(a',b')$, adding the identities for $(a,b)$ and $(a',b')$ and subtracting the identities for $(a,b')$ and $(a',b)$ cancels the two neighbouring terms and $C$, leaving exactly the Monge identity for $E_t$.

For $t=1$, the variable $a=x_0\in T_0$ also determines the endpoint $x_d=a-c\in T_d$.  Fix choices $x_2^*,\ldots,x_{d-1}^*$, with the evident interpretation that for $d=3$ there is only the single fixed choice $x_2^*$.  The part of \eqref{eq:Tconstant} depending on $a\in T_0$ and $b\in T_1$ is
\[
        E_d(x_{d-1}^*,a-c)+E_1(a,b)+E_2(b,x_2^*)+C .
\]
The same four-corner cancellation cancels $E_d(x_{d-1}^*,a-c)$, $E_2(b,x_2^*)$, and $C$, and gives the Monge identity for $E_1$.

For $t=d$, write $b\in T_d$ and remember that the corresponding initial variable is $x_0=b+c\in T_0$.  Fix choices $x_1^*,\ldots,x_{d-2}^*$, again with only one fixed choice when $d=3$.  The part depending on $a\in T_{d-1}$ and $b\in T_d$ is
\[
        E_{d-1}(x_{d-2}^*,a)+E_d(a,b)+E_1(b+c,x_1^*)+C .
\]
Again the four-corner cancellation gives the Monge identity for $E_d$.  Thus every adjacent pair satisfies \eqref{eq:monge} when $d\ge3$.

Now let $d=2$, and write $A=S_0$ and $B=S_1$.  For every $a\in A$ and $b\in B$, the one-transfer cycle has length $s=2q-z$, hence
\[
        D(a,b)+D(b+c,a)=2q-c .
\]
Equivalently,
\begin{equation}
        D(a,b)>q-c
        \qquad(a\in A,\ b\in B).
        \label{eq:d2-threshold}
\end{equation}
Suppose that the arcs $[a,b]_q$, $a\in A$, $b\in B$, have no common blocker.  By the contrapositive part of Lemma \ref{lem:separation}, there are two disjoint such arcs.  After relabelling the two arcs if necessary, rotate the circle and choose integer representatives so that
\[
        a=0\le b<a'\le b'<q .
\]
The inequalities \eqref{eq:d2-threshold} applied to $(a,b)$ and $(a',b')$ give
\[
        b+c>q,
        \qquad
        b'+c-q>a' .
\]
Also $c\le q+1$ and $b<a'$ give
\[
        b+c-q\le b+1\le a' .
\]
Thus in block $1$ the arcs $[a,b]$ and $[a',b']$ are disjoint, while in block $0$ the crossed arcs
\[
        [b+c-q,a']
        \qquad\text{and}\qquad
        [b'+c-q,q]
\]
are disjoint lifts of the arcs from $b+c$ to $a'$ and from $b'+c$ to $a$.  The corresponding cycle is, in lifted notation,
\[
\begin{aligned}
        (0,a)&\to(1,a)\leadsto(1,b)\to(0,b+c)
        \leadsto(0,a')  \\
        &\to(1,a')\leadsto(1,b')\to(0,b'+c)
        \leadsto(0,a),
\end{aligned}
\]
where $\leadsto$ denotes the appropriate horizontal path inside the indicated
block.  The transfer tails are distinct in their respective blocks, and the
transfer heads are precisely the initial vertices of the displayed horizontal
paths, so the transfer edges introduce no additional vertex identifications.
The two block-$1$ horizontal paths are disjoint by $a=0\le b<a'\le b'$, and the
two block-$0$ horizontal paths are disjoint by the displayed crossed intervals.
Hence this is a simple directed cycle using two transfer edges from each block,
contradicting Lemma \ref{lem:kone}.  Thus the arcs from $A$ to $B$ have a common
blocker, and Lemma \ref{lem:separation} gives the Monge identity for $(A,B)$.
The final pair satisfies the Monge identity as well, because
\[
        D(b,a-c)=2q-c-D(a,b)
        \qquad(a\in A,\ b\in B)
\]
by the one-transfer length condition; replacing $D(a,b)$ by this affine negative preserves the $2\times2$ identity.

By Lemma \ref{lem:separation}, choose separated integer lifts $X_0:T_0\to\mathbb Z$ and $X_1:T_1\to\mathbb Z$ for the first adjacent pair, so that
\begin{equation}
        E_1(x_0,x_1)=q+X_1(x_1)-X_0(x_0)
        \qquad(x_0\in T_0,
        \ x_1\in T_1).
        \label{eq:lift1}
\end{equation}
We now propagate these lifts around the cycle.  Suppose $2\le t\le d$ and $X_{t-1}$ has already been defined so that
\[
        E_{t-1}(x_{t-2},x_{t-1})
        =q+X_{t-1}(x_{t-1})-X_{t-2}(x_{t-2}).
\]
Fix $x_{t-2}\in T_{t-2}$ and $x_t\in T_t$ and vary $x_{t-1}\in T_{t-1}$ in \eqref{eq:Tconstant}, keeping all other coordinates fixed.  The terms depending on $x_{t-1}$ are
\[
        q+X_{t-1}(x_{t-1})-X_{t-2}(x_{t-2})
        +E_t(x_{t-1},x_t),
\]
so $X_{t-1}(x_{t-1})+E_t(x_{t-1},x_t)$ is independent of $x_{t-1}$.  Choosing a base point $x_{t-1}^*\in T_{t-1}$ and setting
\[
        X_t(x_t)=X_{t-1}(x_{t-1}^*)+E_t(x_{t-1}^*,x_t)-q
\]
therefore gives
\begin{equation}
        E_t(x_{t-1},x_t)=q+X_t(x_t)-X_{t-1}(x_{t-1})
        \qquad(x_{t-1}\in T_{t-1},
        \ x_t\in T_t).
        \label{eq:liftt}
\end{equation}
This defines compatible lifts $X_t$ for all $t=0,1,\ldots,d$.  Moreover, each $X_t$ is an integer representative of the corresponding residue class:
\begin{equation}
        X_t(x)\equiv x\pmod q,
        \qquad x\in T_t .
        \label{eq:lift-residue}
\end{equation}
This is true for $X_0$ and $X_1$ by the initial choice of separated lifts.  If it is true for $X_{t-1}$, then the defining formula for $X_t$ gives
\[
        X_t(x_t)\equiv x_{t-1}^*+\langle x_t-x_{t-1}^*\rangle_q\equiv x_t\pmod q,
\]
so \eqref{eq:lift-residue} follows by induction.

Summing \eqref{eq:liftt} over $t=1,\ldots,d$ and comparing with \eqref{eq:Tconstant} gives, for every $x_0\in T_0$,
\[
        dq+X_d(x_0-c)-X_0(x_0)=dq-c .
\]
Hence
\begin{equation}
        X_d(x_0-c)=X_0(x_0)-c
        \qquad(x_0\in S_0).
        \label{eq:closure-lifts}
\end{equation}
Now define
\[
        R_t=\min X_t(T_t),
        \qquad t=0,1,\ldots,d .
\]
By \eqref{eq:closure-lifts}, $R_d=R_0-c$.  Moreover, \eqref{eq:liftt} and $0\le E_t\le q-1$ imply
\[
        1\le X_{t-1}(x_{t-1})-X_t(x_t)\le q
        \qquad(x_{t-1}\in T_{t-1},
        \ x_t\in T_t).
\]
Since the $X_t$ are integer representatives by \eqref{eq:lift-residue}, these inequalities imply
\begin{equation}
        T_t\subseteq [R_t,R_{t-1}-1]_q,
        \qquad t=1,\ldots,d .
        \label{eq:nec-lift-inclusion}
\end{equation}
For $t=d$ this says $S_0-c\subseteq [R_d,R_{d-1}-1]_q$; adding $c$ and using $R_d+c=R_0$ gives the required interval for $S_0$.

Define
\[
        m_t=R_{t-1}-R_t,
        \qquad t=1,\ldots,d-1,
\]
and
\[
        m_0=R_{d-1}-R_d .
\]
The preceding inequalities give $m_t\in\{1,\ldots,q\}$.  Also
\[
        \sum_{t=0}^{d-1}m_t=R_0-R_d=c=d+z .
\]
Consequently, writing $a_t=m_t-1$, we have $a_t\ge0$ and $\sum_t a_t=z$. Moreover,
\[
        m_{t-1}+m_t=2+a_{t-1}+a_t\le 2+z\le q+1 .
\]
The inclusions \eqref{eq:nec-lift-inclusion}, together with the translated inclusion for $S_0$, are exactly \eqref{eq:S0a}--\eqref{eq:Sta} with $\rho=R_0$. This completes the proof.
\end{proof}

\begin{corollary}[Carry formulation]
\label{cor:carry}
Under the hypotheses of Theorem~\ref{thm:support}, an admissible support equivalently has an interval presentation determined by
\[
        m_0,\ldots,m_{d-1}\in\{1,\ldots,q\},
        \qquad
        \sum_{t=0}^{d-1}m_t=d+z,
\]
and $\rho\in\mathbb Z_q$, with
\[
        S_0\subseteq[\rho,\rho+m_0-1]_q
\]
and
\[
        S_t\subseteq
        \left[
        \rho-\sum_{j=1}^t m_j,\,
        \rho-\sum_{j=1}^t m_j+m_t-1
        \right]_q,
        \qquad t=1,\ldots,d-1.
\]
The local inequalities
\[
        m_{t-1}+m_t\le q+1,
        \qquad t\in\mathbb Z_d,
\]
are automatic and are not additional hypotheses.
\end{corollary}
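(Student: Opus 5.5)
The corollary is a reindexing of Theorem~\ref{thm:support}, so the plan is to pass between the composition $(a_0,\ldots,a_{d-1})$ and the carry vector $(m_0,\ldots,m_{d-1})$ via $m_t=a_t+1$. First I will check that this substitution is a bijection between the two parameter sets. If $a_t\ge0$ and $\sum_t a_t=z$, then $m_t\ge1$ and $\sum_t m_t=d+z$. Also $m_t\le z+1\le q$ because $z\le q-1$. Conversely, if $m_t\in\{1,\ldots,q\}$ and $\sum_t m_t=d+z$, then $a_t=m_t-1\ge0$ and $\sum_t a_t=z$. The upper bound $m_t\le q$ is therefore automatic on the composition side, and imposing it on the carry side removes nothing.

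Next I will match the intervals. For $t=0$, the interval $[\rho,\rho+a_0]_q$ equals $[\rho,\rho+m_0-1]_q$. For $t\ge1$, put $\sigma_t=\sum_{j=1}^t(a_j+1)=\sum_{j=1}^t m_j$. Then the interval in \eqref{eq:Sta} is $[\rho-\sigma_t,\rho-\sigma_t+a_t]_q=[\rho-\sigma_t,\rho-\sigma_t+m_t-1]_q$. The cyclic-interval notation is well defined here because the length is $m_t\in\{1,\ldots,q\}$. With the same $\rho$, the two sets of inclusion conditions are therefore literally identical. Theorem~\ref{thm:support} then gives the equivalence of admissibility with the carry presentation in both directions.

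Finally, the local inequalities follow from the composition. For $t\in\mathbb Z_d$ with $d\ge2$, the indices $t-1$ and $t$ are distinct. Hence $a_{t-1}+a_t\le\sum_j a_j=z\le q-1$, which gives $m_{t-1}+m_t=2+a_{t-1}+a_t\le q+1$. This is the same computation that appears in the sufficiency part of the theorem's proof. The only point needing care is exactly this one: the inequality relies on $t-1\neq t$ in $\mathbb Z_d$, which is what $d\ge2$ guarantees. There is no substantive obstacle; the argument is pure bookkeeping.
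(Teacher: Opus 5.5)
Your proposal is correct and follows essentially the same route as the paper: the substitution $m_t=a_t+1$, the automatic bounds $m_t\le z+1\le q$ and $m_{t-1}+m_t\le z+2\le q+1$, and the observation that the interval formulas coincide after the substitution. Your explicit checks that the cyclic intervals are well defined (length $m_t\le q$) and that $d\ge2$ makes $t-1\ne t$ in $\mathbb Z_d$ are small details the paper leaves implicit.
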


\begin{proof}
Set $a_t=m_t-1$. Then $\sum_t a_t=z$ and every $a_t$ is nonnegative. Conversely, from any composition in Theorem~\ref{thm:support}, setting $m_t=a_t+1$ gives $m_t\le z+1\le q$ and
\[
        m_{t-1}+m_t
        =2+a_{t-1}+a_t
        \le z+2\le q+1.
\]
The interval formulas are the same after this substitution.
\end{proof}

\begin{remark}[The two-block asymmetry]
For $d\ge3$, the four-corner cancellation in the necessity proof forces the Monge identity for every adjacent pair directly from the tuple-cycle length identity. When $d=2$, the cyclic endpoints coincide too closely for that cancellation, and the additional disjoint-cycle argument leading to \eqref{eq:d2-threshold} is essential.
\end{remark}

\begin{remark}
The representation by $a_0,\ldots,a_{d-1}$ and $\rho$ is not meant to be unique.  The theorem is existential: a support family is admissible if and only if it has at least one such interval presentation.  Different cyclic origins, or larger intervals containing the same support sets, may represent the same support family.
\end{remark}

\section{The weighted Type II parametrisation}
\label{sec:weighted}

We now add weights.  Let $S_0,\ldots,S_{d-1}$ be a Type II admissible support.  For $i\in S_t$, choose a number
\[
        b_{t,i}\in(0,1),
\]
and for $i\notin S_t$ put $b_{t,i}=1$.  The weighted two-choice matrix is the $n\times n$ matrix indexed by $\mathbb Z_d\times\mathbb Z_q$ with nonzero entries
\begin{equation}
        A_{(t,i),(t,i+1)}=b_{t,i},
        \label{eq:matrixh}
\end{equation}
\begin{equation}
        A_{(t,i),(t+1,i)}=1-b_{t,i},
        \qquad t=0,\ldots,d-2,
        \qquad i\in S_t,
        \label{eq:matrixt}
\end{equation}
and
\begin{equation}
        A_{(d-1,i),(0,i+c)}=1-b_{d-1,i},
        \qquad i\in S_{d-1}.
        \label{eq:matrixtlast}
\end{equation}
All omitted entries are zero.  This matrix is stochastic.

\begin{theorem}[Full Type II realiser parametrisation]
\label{thm:weighted}
Let
\[
        f_\alpha(x)=\bigl(x^q-(1-\alpha)\bigr)^d-\alpha^d x^z,
        \qquad 0<\alpha<1,
\]
where $q\ge2$, $d\ge2$, $1\le z\le q-1$, and $\gcd(q,z)=1$.  Put $\beta=1-\alpha$ and $n=qd$.

An $n\times n$ stochastic matrix has characteristic polynomial $f_\alpha$ if and only if it is permutation-similar to a matrix of the form \eqref{eq:matrixh}--\eqref{eq:matrixtlast}, with $0<b_{t,i}<1$ for $i\in S_t$ and $b_{t,i}=1$ for $i\notin S_t$, where the data satisfy the following conditions.
\begin{enumerate}[label=\textup{(\alph*)}]
\item There are $a_0,\ldots,a_{d-1}\in\mathbb Z_{\ge0}$ and $\rho\in\mathbb Z_q$ such that
\[
        a_0+\cdots+a_{d-1}=z,
\]
\[
        \varnothing\ne S_0\subseteq [\rho,\rho+a_0]_q,
\]
and, for $t=1,\ldots,d-1$,
\[
        \varnothing\ne S_t\subseteq
        \left[
        \rho-\sum_{j=1}^t(a_j+1),\,
        \rho-\sum_{j=1}^t(a_j+1)+a_t
        \right]_q .
\]
\item The weights satisfy $b_{t,i}\in(0,1)$ for $i\in S_t$, $b_{t,i}=1$ for $i\notin S_t$, and
\begin{equation}
        \prod_{i\in S_t}b_{t,i}=\beta,
        \qquad t=0,
        \ldots,d-1 .
        \label{eq:weightproduct}
\end{equation}
\end{enumerate}
The interval data in \textup{(a)} need not be unique, but every realiser admits at least one such presentation.
\end{theorem}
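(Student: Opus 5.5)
The plan is to prove the two directions separately, with Theorem~\ref{thm:support} handling all combinatorics and Coates' formula \eqref{eq:coates} handling the spectrum.

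\emph{Necessity.} Let $A$ be a stochastic matrix with $\chi_A=f_\alpha$. By Proposition~\ref{prop:blocknormal}, after a permutation similarity $A$ is supported on the two-choice digraph \eqref{eq:horiz}--\eqref{eq:transferlast}. Every horizontal edge is positive, and the horizontal product in each block equals $\beta$. Stochasticity then forces the row of $(t,i)$ to be $b_{t,i}$ on $h_{t,i}$ and $1-b_{t,i}$ on $g_{t,i}$. Hence $S_t=\{i:b_{t,i}<1\}$, and \eqref{eq:weightproduct} is exactly the statement that the horizontal $q$-cycle in block $t$ has weight $\beta$. Each $S_t$ is nonempty because $\beta<1$. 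The imported input (I1)--(I2) states that every cycle other than the $d$ horizontal ones has length $s$, so the support is Type~II admissible in the sense of Definition~\ref{def:admissible}. Theorem~\ref{thm:support} then supplies the data in (a).

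\emph{Sufficiency.} Take data (a) and (b) and build $A$ by \eqref{eq:matrixh}--\eqref{eq:matrixtlast}. This matrix is clearly stochastic. By Theorem~\ref{thm:support}, its only cycles are the $d$ horizontal $q$-cycles $H_0,\dots,H_{d-1}$, each of weight $\beta$, and transfer cycles of length $s=qd-z$. The sufficiency argument of that theorem shows each transfer cycle uses exactly one transfer edge per block, so it meets every block.

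Next I would enumerate the linear subdigraphs. A transfer cycle meets every block, so it is vertex-disjoint from no horizontal cycle. Two transfer cycles cannot be disjoint either, because $2s=2qd-2z>qd$ when $z<q/2$. For $z>q/2$ the counting bound fails, and I would argue instead that two disjoint transfer cycles would both pass through block~$0$ with disjoint horizontal segments. This contradicts the common-blocker property from Lemma~\ref{lem:separation}, which is the same argument used in the sufficiency proof to exclude two transfer targets in one block. So a linear subdigraph is either a union of $k$ horizontal cycles, with $\binom dk$ choices and weight $\beta^k$, or a single transfer cycle.

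Coates' formula then gives
\[
\chi_A(x)=\sum_{k=0}^d\binom dk(-\beta)^kx^{q(d-k)}-Wx^{qd-s}=(x^q-\beta)^d-Wx^{z},
\]
where $W$ is the total weight of all transfer cycles. Since $A$ is stochastic, $\chi_A(1)=0$, which gives $W=(1-\beta)^d=\alpha^d$. Hence $\chi_A=f_\alpha$.

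The step I expect to be the main obstacle is justifying the disjointness claim for two transfer cycles. I would handle it by reusing, essentially verbatim, the blocker argument from the proof of Theorem~\ref{thm:support}. The last point to check is that $W$ never needs to be computed combinatorially, because evaluating at $x=1$ determines it.
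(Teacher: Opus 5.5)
Your proposal is correct and follows the paper's proof in both directions: Proposition~\ref{prop:blocknormal} together with Theorem~\ref{thm:support} gives necessity, and Theorem~\ref{thm:support}, Coates' formula and evaluation at $x=1$ give sufficiency. One correction: the counting bound never fails. Indeed $2s>qd$ is equivalent to $qd>2z$, and $d\ge2$, $z\le q-1$ give $qd\ge 2q>2z$, so the paper simply uses $2s>n$; your common-blocker argument for $z>q/2$ is valid but redundant, not the main obstacle.
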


\begin{proof}
First suppose that $A$ is constructed from data satisfying (a) and (b).  By Theorem \ref{thm:support}, its support has only the $d$ horizontal $q$-cycles and transfer cycles of length $s=qd-z$.  The horizontal $q$-cycle in block $t$ has weight
\[
        \prod_{i\in\mathbb Z_q}b_{t,i}
        =\prod_{i\in S_t}b_{t,i}
        =\beta .
\]
The $d$ horizontal cycles are vertex-disjoint.  Therefore their contribution to the characteristic polynomial is exactly
\[
        \bigl(x^q-\beta\bigr)^d .
\]
Every transfer cycle has length $s=qd-z$, and two transfer cycles cannot be vertex-disjoint because
\[
        2s=2(qd-z)>qd=n
\]
under $d\ge2$ and $z\le q-1$.  A transfer cycle also intersects every horizontal $q$-cycle, since it uses at least one vertex in every block.  Hence Coates' formula gives
\[
        \chi_A(x)=\bigl(x^q-\beta\bigr)^d-Wx^z
\]
Here the sign is negative because each transfer cycle is a single cycle in
Coates' formula and has length $s=qd-z$, hence contributes to the
coefficient of $x^{n-s}=x^z$.  The scalar $W\ge0$ is the total weight of
the transfer cycles.  Since $A$ is stochastic, $1$ is an eigenvalue, so
\[
        0=\chi_A(1)=(1-\beta)^d-W=\alpha^d-W .
\]
Thus $W=\alpha^d$, and
\[
        \chi_A(x)=\bigl(x^q-\beta\bigr)^d-\alpha^d x^z=f_\alpha(x).
\]

Conversely, let $A$ be a stochastic matrix with characteristic polynomial
$f_\alpha$.  By Lemma \ref{lem:typeII-exact-order} and the Ito parametrisation
in Section \ref{sec:background}, let
\[
        \lambda_\alpha\in K_n(\{q,s\})\setminus\Theta_{n-1}
\]
be the boundary root of $f_\alpha$.  Since $\chi_A=f_\alpha$, this
$\lambda_\alpha$ is an eigenvalue of $A$.  The hypotheses of the
Dmitriev--Dynkin/Kirkland--\v Smigoc reduction are therefore satisfied.
Applying that reduction and then the relabelling of Proposition
\ref{prop:blocknormal}, $A$ is permutation-similar to a weighted two-choice
matrix of the form \eqref{eq:matrixh}--\eqref{eq:matrixtlast}; the $d$
horizontal $q$-cycles have common weight $\beta$, and every remaining directed
cycle has length $s=qd-z$.  Therefore the transfer support is Type II
admissible in the sense of Definition \ref{def:admissible}.  Applying Theorem
\ref{thm:support} gives the interval presentation in (a).  The weight of the
horizontal $q$-cycle in block $t$ is $\prod_{i\in S_t}b_{t,i}$, so equality of
the $q$-cycle weights with $\beta$ gives \eqref{eq:weightproduct}.  This proves
the converse.
\end{proof}

\begin{corollary}[Irreducibility]
\label{cor:irreducible}
Every matrix in Theorem~\ref{thm:weighted} is irreducible.
\end{corollary}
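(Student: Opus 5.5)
Irreducibility is invariant under permutation similarity, so by Theorem~\ref{thm:weighted} it suffices to treat a weighted two-choice matrix $A$ of the form \eqref{eq:matrixh}--\eqref{eq:matrixtlast}. The plan is to show directly that its digraph on $V=\mathbb Z_d\times\mathbb Z_q$ is strongly connected. Since only the positions of the nonzero entries matter, condition (b) on the weights plays no role beyond guaranteeing that all horizontal edges are present ($b_{t,i}>0$ for every $i$) and that every transfer edge $g_{t,i}$ with $i\in S_t$ is present ($1-b_{t,i}>0$).

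First, each block $\{t\}\times\mathbb Z_q$ is strongly connected, because all $q$ horizontal edges $h_{t,i}$ are positive and form a directed $q$-cycle. Second, every $S_t$ is nonempty: this is built into condition (a), and it is also forced by \eqref{eq:weightproduct}, since $\beta<1$ rules out $S_t=\varnothing$. Pick any $i_t\in S_t$ for each $t$. The transfer edge $g_{t,i_t}$ then leads from block $t$ into block $t+1$ for $t\le d-2$, and $g_{d-1,i_{d-1}}$ leads from block $d-1$ into block $0$.

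Now take arbitrary vertices $(t,i)$ and $(t',i')$. From $(t,i)$, follow horizontal edges to $(t,i_t)$ and take the transfer edge into block $t+1$. Repeat this block by block, moving cyclically through the first coordinate, until block $t'$ is reached. Then follow horizontal edges within block $t'$ to $(t',i')$. This gives a directed path between any two vertices, so the digraph is strongly connected and $A$ is irreducible.

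An alternative route is to apply Lemma~\ref{lem:tuple-simple} to the tuple $(i_0,\ldots,i_{d-1})$, which gives a single directed cycle meeting every block, and combine it with the horizontal cycles. There is no real obstacle in either version. The only point needing care is confirming that nonemptiness of every $S_t$ is part of the hypotheses, which it is, both by (a) and by $\beta<1$.
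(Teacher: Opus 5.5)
Your proposal is correct and follows essentially the same route as the paper's proof: horizontal $q$-cycles make each block strongly connected, nonempty transfer sets link block $t$ to block $t+1$ cyclically, and strong connectivity gives irreducibility. Your additional remarks (invariance under permutation similarity, and nonemptiness of $S_t$ being forced also by $\prod_{i\in S_t}b_{t,i}=\beta<1$) are valid refinements.
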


\begin{proof}
The positive horizontal edges form a directed $q$-cycle in every block, so every vertex can reach every other vertex in the same block. Each transfer set $S_t$ is nonempty, and its transfer edges advance from block $t$ to block $t+1$ cyclically. Hence one can move from any block to any other block and then reach any prescribed vertex horizontally. The weighted digraph is strongly connected, which is equivalent to irreducibility of the nonnegative matrix.
\end{proof}

\begin{example}[$(q,d,z)=(5,2,2)$]
\label{ex:522}
Let $q=5$, $d=2$, and $z=2$. Then $n=10$, $s=8$, $c=4$, and
\[
        f_\alpha(x)=\bigl(x^5-\beta\bigr)^2-\alpha^2x^2,
        \qquad \beta=1-\alpha.
\]
Choose the composition $(a_0,a_1)=(1,1)$ and origin $\rho=0$. The intervals in Theorem~\ref{thm:support} are
\[
        I_0=\{0,1\},
        \qquad
        I_1=\{3,4\},
\]
and take the maximal supports $S_0=I_0$ and $S_1=I_1$. The corresponding weight cell has dimension
\[
        (|S_0|-1)+(|S_1|-1)=2=z,
\]
so it attains the non-sparsity bound of Section~\ref{sec:consequences}.

Set $\alpha=3/10$ and $\beta=7/10$, and choose
\[
        b_{0,0}=\frac78,
        \quad b_{0,1}=\frac45,
        \qquad
        b_{1,3}=\frac57,
        \quad b_{1,4}=\frac{49}{50}.
\]
Both block products equal $7/10$, as required by \eqref{eq:weightproduct}. Order the vertices as
\[
        (0,0),\ldots,(0,4),(1,0),\ldots,(1,4).
\]
Then the matrix \eqref{eq:matrixh}--\eqref{eq:matrixtlast} is
{\scriptsize
\[
A=\begin{pmatrix}
0&\tfrac78&0&0&0&\tfrac18&0&0&0&0\\
0&0&\tfrac45&0&0&0&\tfrac15&0&0&0\\
0&0&0&1&0&0&0&0&0&0\\
0&0&0&0&1&0&0&0&0&0\\
1&0&0&0&0&0&0&0&0&0\\
0&0&0&0&0&0&1&0&0&0\\
0&0&0&0&0&0&0&1&0&0\\
0&0&0&0&0&0&0&0&1&0\\
0&0&\tfrac27&0&0&0&0&0&0&\tfrac57\\
0&0&0&\tfrac1{50}&0&\tfrac{49}{50}&0&0&0&0
\end{pmatrix}.
\]
}
Direct expansion gives
\[
\begin{aligned}
\chi_A(x)
&=x^{10}-\frac75x^5-\frac9{100}x^2+\frac{49}{100}\\
&=\left(x^5-\frac7{10}\right)^2
  -\left(\frac3{10}\right)^2x^2
=f_{3/10}(x).
\end{aligned}
\]
The four transfer tuples in $S_0\times S_1$ give four simple $8$-cycles. For example, $(i_0,i_1)=(0,3)$ gives
\[
\begin{aligned}
(0,0)&\to(1,0)\to(1,1)\to(1,2)\to(1,3)\\
     &\to(0,2)\to(0,3)\to(0,4)\to(0,0).
\end{aligned}
\]

The interval restriction is sharp. Replace $S_1=\{3,4\}$ by
$S_1'=\{2,4\}$ and assign the same numerical weight values to the modified support positions. The resulting graph contains the forbidden $3$-cycle
\[
        (0,1)\to(1,1)\to(1,2)\to(0,1),
\]
and direct expansion gives
\[
        \chi_A(x)
        =x^{10}-\frac{2}{35}x^7-\frac75x^5
         -\frac{23}{700}x^2+\frac{49}{100}
        \ne f_{3/10}(x).
\]
The $x^7$ term records a directed cycle of length $10-7=3$, which is not among the permitted lengths $q=5$ and $s=8$.
\end{example}

\begin{figure}[t]
\centering
\includegraphics[width=\textwidth]{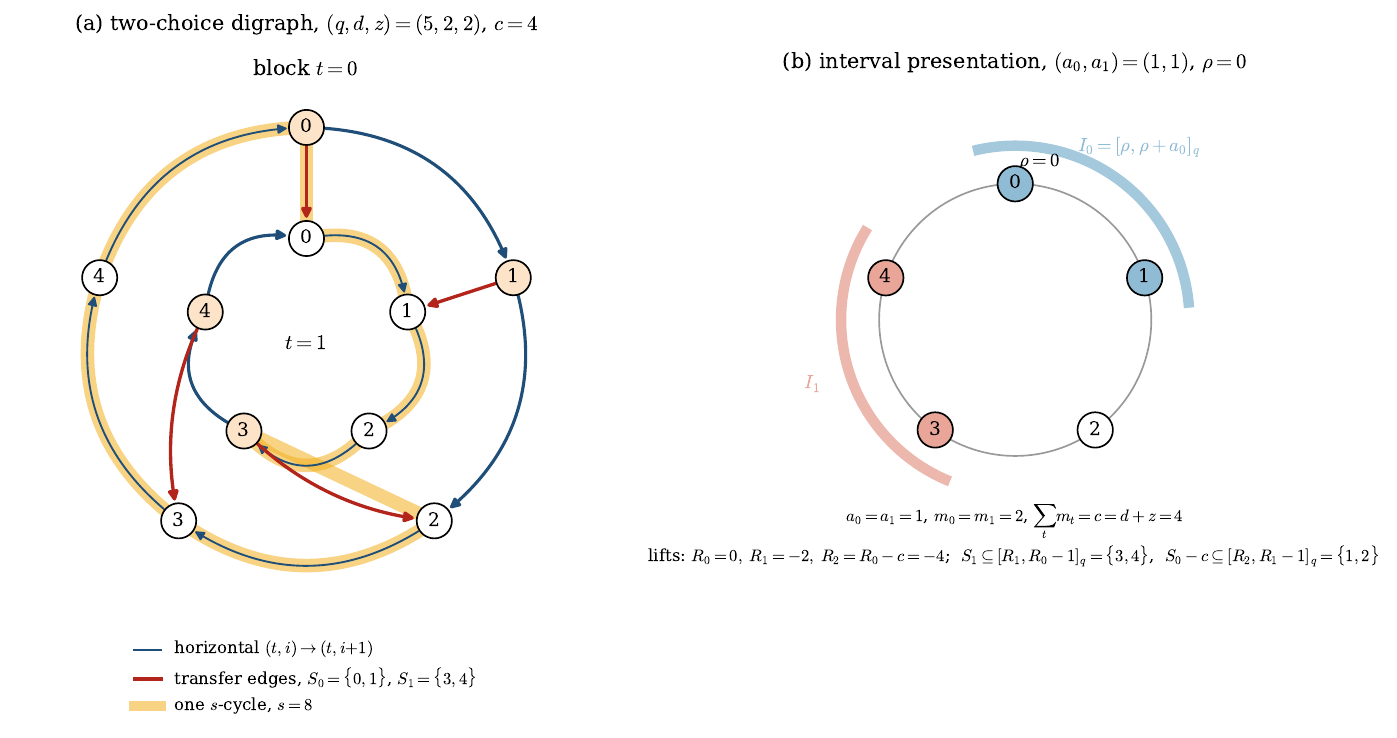}
\caption{The exact example in Example~\ref{ex:522}. (a) The two-choice digraph with $S_0=\{0,1\}$ and $S_1=\{3,4\}$; the shaded path is the transfer cycle associated with $(i_0,i_1)=(0,3)$. (b) The interval presentation for $(a_0,a_1)=(1,1)$ and $\rho=0$, showing the consecutive cyclic intervals underlying the common-blocker argument.}
\label{fig:digraph}
\end{figure}

\section{Consequences and scope}
\label{sec:consequences}

For a fixed admissible support family $S_0,\ldots,S_{d-1}$, the free weight parameters are exactly the numbers $b_{t,i}$ with $i\in S_t$, subject to one product constraint in each block.  Hence the dimension of the corresponding open weight cell is
\begin{equation}
        \sum_{t=0}^{d-1}(|S_t|-1).
        \label{eq:dimension}
\end{equation}
The sparsest realisers are precisely those for which $|S_t|=1$ for every $t$.  In that case every weight cell has dimension zero and the unique transfer edge in each block has weight $\alpha$.

The interval theorem also shows that the total amount of non-sparsity is bounded by $z$.  Indeed, if $S_t\subseteq I_t$ and $|I_t|=a_t+1$, then
\[
        \sum_{t=0}^{d-1}(|S_t|-1)
        \le
        \sum_{t=0}^{d-1}a_t
        =z .
\]
Thus $z$ is not merely the exponent of the exceptional term in the reduced Ito polynomial; it is the maximum global budget for additional transfer positions beyond the sparsest support, and the maximum is attained by taking all positions in the permitted intervals.

Kirkland and \v Smigoc parametrise the sparsest Type II case by integers $x_0,\ldots,x_{d-1}\in\{0,\ldots,q-1\}$ satisfying
\[
        x_0+\cdots+x_{d-1}=qd-z-d .
\]
Up to the cyclic reindexing used in Proposition \ref{prop:blocknormal},
the present parametrisation is the complementary one:
\[
        m_t=q-x_t,
        \qquad
        a_t=m_t-1=q-1-x_t .
\]
Then
\[
        \sum_{t=0}^{d-1}a_t
        =d(q-1)-\sum_{t=0}^{d-1}x_t
        =z .
\]
The non-sparse realisers are obtained by replacing the single transfer position in block $t$ by an arbitrary nonempty subset of the interval of length $a_t+1$ determined by the same cyclic carry data, and then distributing the horizontal weights in that block subject only to product $\beta$.

The parametrisation has the following precise scope. For the non-degenerate full-degree Type II reduced Ito polynomial
\[
        \bigl(x^q-(1-\alpha)\bigr)^d-\alpha^d x^z,
        \qquad n=qd,
        \qquad 1\le z\le q-1,
        \qquad \gcd(q,z)=1,
\]
every stochastic matrix with that characteristic polynomial is now described, up to permutation similarity, by explicit finite support data and elementary product constraints on the weights.

This leaves several adjacent problems untouched.
\begin{enumerate}[label=\textup{(\roman*)}]
\item The global analytic Karpelevich-arc problem remains separate. Lemma \ref{lem:typeII-exact-order} uses the radial boundary theorem only to compare the order-$(qd-1)$ and order-$qd$ arcs with the same Farey endpoints, thereby supplying the exact-order hypothesis for the Dmitriev--Dynkin/Kirkland--\v Smigoc normal form. The present parametrisation realises the polynomial, and hence all of its roots, as the spectrum of a stochastic matrix; it does not identify a global continuous root branch for every non-power arc, prove angular monotonicity, or prove extremality beyond the exact-order comparison used here.
\item The Type III geometry is not a corollary of the Type II support theorem. In Type III the normal form has a global $n$-cycle and backward $q$-edges; the companion manuscript treats that problem by different methods, using Coates' formula, weighted Tur\'an equality, and a circular telescoping argument \cite{VerbekenGinisTypeIII}.
\item The powers problem for non-sparse Type II realisers is open.  Joshi, Kirkland and \v Smigoc classify powers of Karpelevich arcs and powers of sparsest realising matrices \cite{JoshiKirklandSmigoc2023}; Theorem \ref{thm:weighted} gives the larger non-sparse class on which an analogous power/root question can now be asked.
\item The endpoint cases $\alpha=0$ and $\alpha=1$ are degenerate and are not included in Theorem \ref{thm:weighted}. Explicitly,
\[
        f_0(x)=(x^q-1)^d,
        \qquad
        f_1(x)=x^z\bigl(x^{qd-z}-1\bigr).
\]
The open-arc input used in the proof therefore fails, and additional reducible realisations are no longer governed by the same parametrisation.
\item The interval presentation in Theorem \ref{thm:support} is existential and nonunique.  A canonical enumeration of support classes modulo the natural cyclic symmetries would be a separate finite combinatorial refinement.
\item The broader problem of classifying all stochastic matrices having a specified boundary eigenvalue, while allowing additional spectral factors, is larger than the characteristic-polynomial realiser problem considered here.
\end{enumerate}

Thus Theorem~\ref{thm:weighted} gives a complete parametrisation, up to permutation similarity, for the full Type~II arithmetic range $q\ge2$, $d\ge2$, $1\le z\le q-1$, $\gcd(q,z)=1$, and $0<\alpha<1$. What remains is global analytic arc theory, endpoint structure, powers of non-sparse realisers, and canonical enumeration.

\section*{Funding}
Vincent Ginis acknowledges support from the Research Foundation -- Flanders (FWO) under grants No. G032822N and G0K9322N.

\section*{Declaration of competing interest}
The authors declare that they have no competing interests.

\section*{Data availability}
No data were used for the research described in this article.

\section*{Acknowledgements}
The authors would like to thank Carlo Emerencia for his enthusiastic and careful proofreading; his remarks made the manuscript substantially better.

\end{document}